\numberwithin{equation}{section}			
\numberwithin{figure}{section}				
\theoremstyle{plain}
\newtheorem{theorem1}{Theorem}[section]
\newtheorem{corollary1}[theorem1]{Corollary}
\newtheorem{lemma1}[theorem1]{Lemma}
\theoremstyle{definition}
\newtheorem{definition1}[theorem1]{Definition}
\newtheorem{example}[theorem1]{Example}
\theoremstyle{remark}
\newtheorem{remark1}[theorem1]{Remark}
\newtheorem{remarks1}[theorem1]{Remarks}
\newenvironment{theorem}{\begin{framed}\begin{theorem1}}{\end{theorem1}\end{framed}}
\newenvironment{corollary}{\begin{framed}\begin{corollary1}}{\end{corollary1}\end{framed}}
\newenvironment{definition}{\begin{framed}\begin{definition1}}{\end{definition1}\end{framed}}
\newenvironment{remark}{\begin{framed}\begin{remark1}}{\end{remark1}\end{framed}}
\DeclareMathOperator{\sn}{sn}
\DeclareMathOperator{\cn}{cn}
\DeclareMathOperator{\dn}{dn}
\DeclareMathOperator{\en}{en}
\DeclareMathOperator{\arccosh}{arccosh}
\DeclareMathOperator{\am}{am}
\newcommand{\N}{\mathbb{N}}                  
\newcommand{\R}{\mathbb{R}}
\newcommand{\cj}[1]{{\overline{#1}}}            
\newcommand*\dif{\mathop{}\!\textnormal{\slshape d}}
\begin{document}

\title{The {\sc Liouville} line element and the energy of the diagonals}

\author{\sc C{\v a}lin--{\c S}erban B{\v a}rbat}

\date{\today}

\keywords{{\sc Liouville} surface, {\sc Liouville} manifold, {\sc Liouville} parametrization, {\sc Liouville} net, {\sc Liouville} line element, energy of curve, theorem of {\sc Ivory}}

\begin{abstract}
In this work I show that in each rectangle formed by the parameter curves on a Liouville surface the energies of the main diagonals are equal. This result extends naturally to $n$-dimensional Liouville manifolds.
\end{abstract}

{\centering\maketitle}

\tableofcontents

\section{Introduction}

In this work I show that in each rectangle formed by the parameter lines on a {\sc Liouville} surface the diagonals have the same energy (see the main theorem \ref{main_thm}).
This is valid, when the surface has what I call an orthogonal {\sc Liouville} line element (including all special cases, like the isothermal one for $U_2 = U_1$ and $V_2 = V_1$ which is known as {\sc Liouville} line element in the literature) of the form:
\begin{align*}
{\dif s}^2 = ( U_1(u) + V_1(v) ) {\dif u}^2 + ( U_2(u) + V_2(v) ) {\dif v}^2 
\end{align*}
The diagonals are, in general, not geodesics on the surface; they are the image curves of the diagonals in a rectangle formed by two pairs of parameter lines in the definition domain. If the line element is isothermal 
then the parametrization of the surface is conformal and preserves angles between curves and therefore the diagonals on the surface are isogonal trajectories with respect to the parameter curves of the surface.

\subsection{Plane isothermal {\sc Liouville} maps}

In the plane there are only four distinct isothermal {\sc Liouville} maps (up to similarity transforms), see figure \ref{fig:plane_li_2}.
\begin{figure}[hbt]
\centering
\includegraphics[scale=0.5]{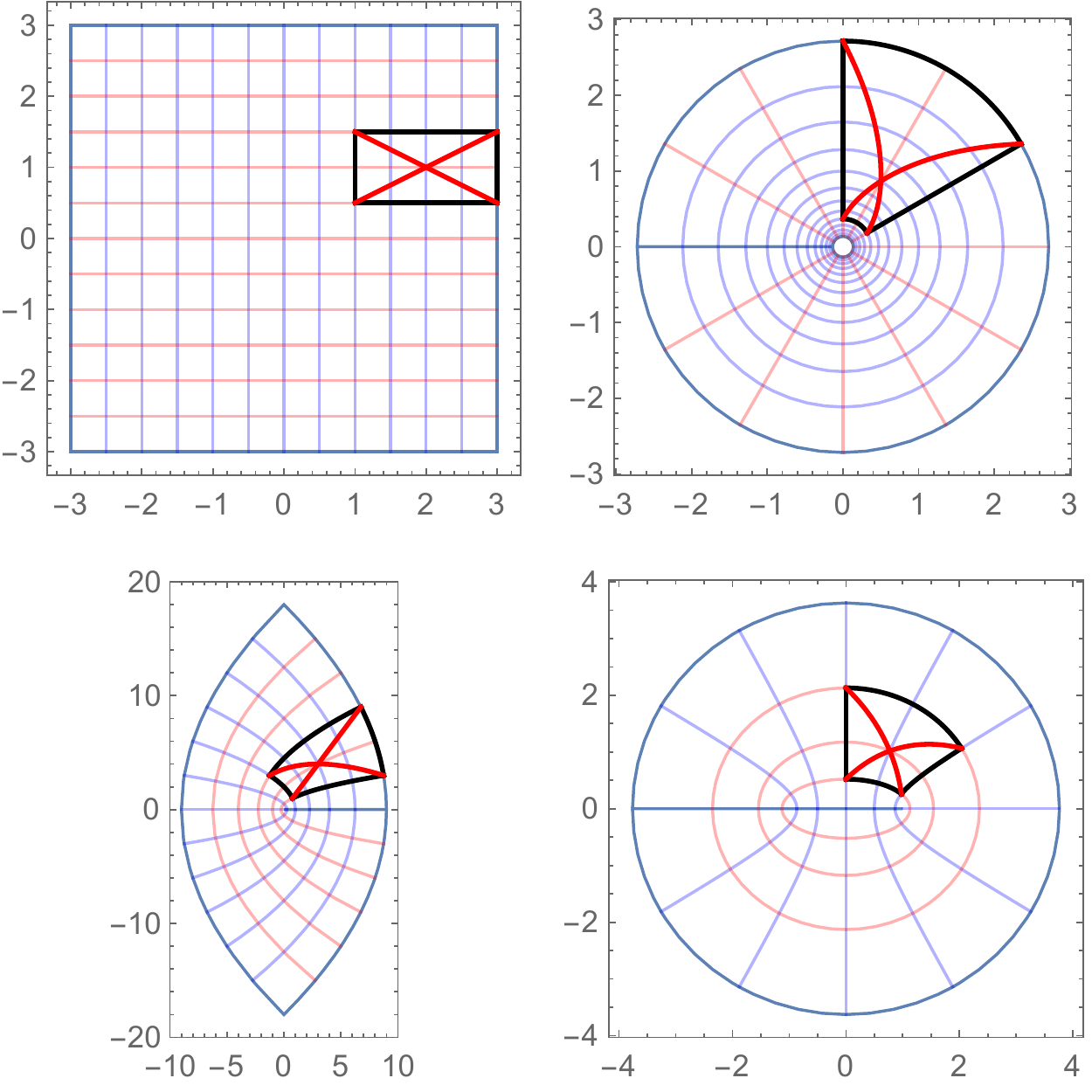} 
\caption{Isothermal {\sc Liouville} coordinates on the plane: the two diagonals in each rectangle have the same energy}
\label{fig:plane_li_2}
\end{figure}
The diagonals of each rectangle formed by parameter lines in the domain of definition are mapped to isogonal trajectories with respect to the parameter lines in the image plane because the map is isothermal and therefore conformal, see figure \ref{fig:plane_li_2}. 

Because the line element of these maps is isothermal {\sc Liouville}, the diagonals in each rectangle formed by pairs of parameter lines (belonging each to the two different families of parameter lines) have the same energy (see theorem \ref{main_thm_1} for the proof).

If we take the straight line diagonals -- these are geodesics in the plane -- then we have the theorem of {\sc Ivory}: 
\begin{theorem}[{\sc Ivory}]
The geodesic diagonals in a rectangle formed by parameter lines have the same length if and only if the map has a {\sc Stäckel} line element.
\end{theorem}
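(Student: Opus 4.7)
The plan is to prove the two implications separately: the forward direction via Hamilton--Jacobi separation, the converse by expanding geodesic lengths in infinitesimal rectangles.

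For the forward direction (Stäckel implies equal geodesic diagonal lengths), I would exploit the fact that on a surface with Stäckel line element the Hamilton--Jacobi equation for the eikonal separates. This is equivalent to the existence of a quadratic Killing tensor in addition to the metric, and Hamilton's characteristic function decomposes as $W(u,v;c) = W_1(u;c) + W_2(v;c)$ for a separation constant $c$. This yields an expression for the length of the geodesic joining two points $(u_a,v_a)$ and $(u_b,v_b)$ of the form
\[
 L(c) \;=\; \int_{u_a}^{u_b}\sqrt{A(u;c)}\,\dif u \;+\; \int_{v_a}^{v_b}\sqrt{B(v;c)}\,\dif v,
\]
with $c$ fixed by the endpoint data through the transversality condition $\partial_c W = 0$. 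For the two diagonals of a coordinate rectangle with corners $(u_0,v_0),(u_1,v_0),(u_0,v_1),(u_1,v_1)$, I would check that the symmetry of the endpoint conditions forces the \emph{same} separation constant $c$ for both diagonals, so that the $u$- and $v$-integrals entering their lengths coincide and the two diagonal arc lengths agree.

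For the converse (equal geodesic diagonal lengths implies Stäckel), the strategy is to consider an infinitesimal rectangle of sides $\varepsilon,\delta$ based at a point $(u,v)$ and to expand both diagonal geodesic lengths as joint power series in $\varepsilon,\delta$ using the exponential map and the Taylor expansion of the Christoffel symbols. Equating the two expansions order by order yields a hierarchy of PDEs on the metric coefficients and their derivatives. The lowest nontrivial order already enforces orthogonality $g_{uv}=0$; the higher orders, which bring in the Christoffel symbols along the diagonal geodesics, encode Eisenhart's classical integrability conditions for orthogonal separability of the Hamilton--Jacobi equation. Those conditions are equivalent to the existence of a quadratic Killing tensor with simple spectrum diagonal in the coordinate frame, hence to the Stäckel form of the metric.

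The main obstacle will be the converse direction: passing from an integrated finite-rectangle identity to a pointwise differential separability condition. The cleanest route, which I would pursue, is to differentiate the equal-length identity with respect to the four rectangle vertices and recognise the resulting quadratic-in-momenta expression on the cotangent bundle as a conserved quantity for the geodesic flow, i.e.\ a Killing tensor; Eisenhart's theorem on Stäckel systems then closes the argument. The delicate bookkeeping lies in controlling enough Taylor coefficients of the geodesic length to force this Killing tensor to be of Stäckel type rather than merely of generic Killing type, and in ruling out degenerate cases where the would-be Killing tensor coincides with the metric itself.
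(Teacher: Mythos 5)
The paper does not prove this theorem at all: it is quoted as a known result, with the proof attributed to {\sc W. Blaschke} \cite{bla} and {\sc K. Zwirner} \cite{zwi} and to the modern treatments \cite{Iz1}, \cite{Iz2}; the author's own contribution (theorems \ref{main_thm_1} and \ref{main_thm_2}) concerns the \emph{energy} of the coordinate diagonals, not the \emph{length} of the geodesic diagonals. So there is no in-paper argument to compare yours against, and your proposal has to be judged against the cited literature.

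Measured that way, your outline follows the classical route. The forward direction via separation of the Hamilton--Jacobi equation, $W = W_1(u;c)+W_2(v;c)$, with the length of each diagonal written as a sum of a $u$-integral and a $v$-integral over the \emph{same} parameter ranges $[u_0,u_1]$ and $[v_0,v_1]$, is exactly Blaschke's argument. But the step you dispose of with ``I would check that the symmetry of the endpoint conditions forces the same separation constant $c$ for both diagonals'' is precisely the delicate point: the transversality condition $\partial_c W=0$ involves square roots whose branches can differ along the two diagonals, the minimizing geodesic need not be unique, and the separated representation degenerates at turning points of $u$ or $v$ along the geodesic. This is the gap in Blaschke's original proof that, as the paper's own remark records, {\sc A. Thimm} had to fill (see \cite{Iz1}). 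As written, your forward direction inherits that gap. The converse via Taylor expansion of the two diagonal lengths on an infinitesimal rectangle, extraction of a quadratic first integral, and Eisenhart's characterization of Stäckel metrics is a plausible program and close in spirit to Zwirner's work, but it is a program rather than a proof: you would need to exhibit explicitly which Taylor coefficients produce the Killing tensor and why it is diagonal with simple spectrum in the given coordinates, and the degenerate case you mention (the candidate tensor proportional to the metric) genuinely occurs and must be excluded by using rectangles of varying aspect ratio --- compare the role of the parameter $\varepsilon$ in the paper's proof of theorem \ref{main_thm_2}, which faces the analogous issue for energies.
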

\begin{remark}
This theorem was proved by {\sc W. Blaschke} \cite{bla} and his student {\sc K. Zwirner} \cite{zwi} for the $2$ and $3$ dimensional case and it is valid in higher dimensions too. A proof is also found in \cite{Iz1} and \cite{Iz2} where they say that {\sc A. Thimm} filled a gap in {\sc W. Blaschke}'s proof.
\end{remark}
The isothermal {\sc Liouville} map is of {\sc Stäckel} type and therefore the straight line diagonals have the same length, see figure \ref{fig:plane_li_3}.
\begin{figure}[H]
\centering
\includegraphics[scale=0.5]{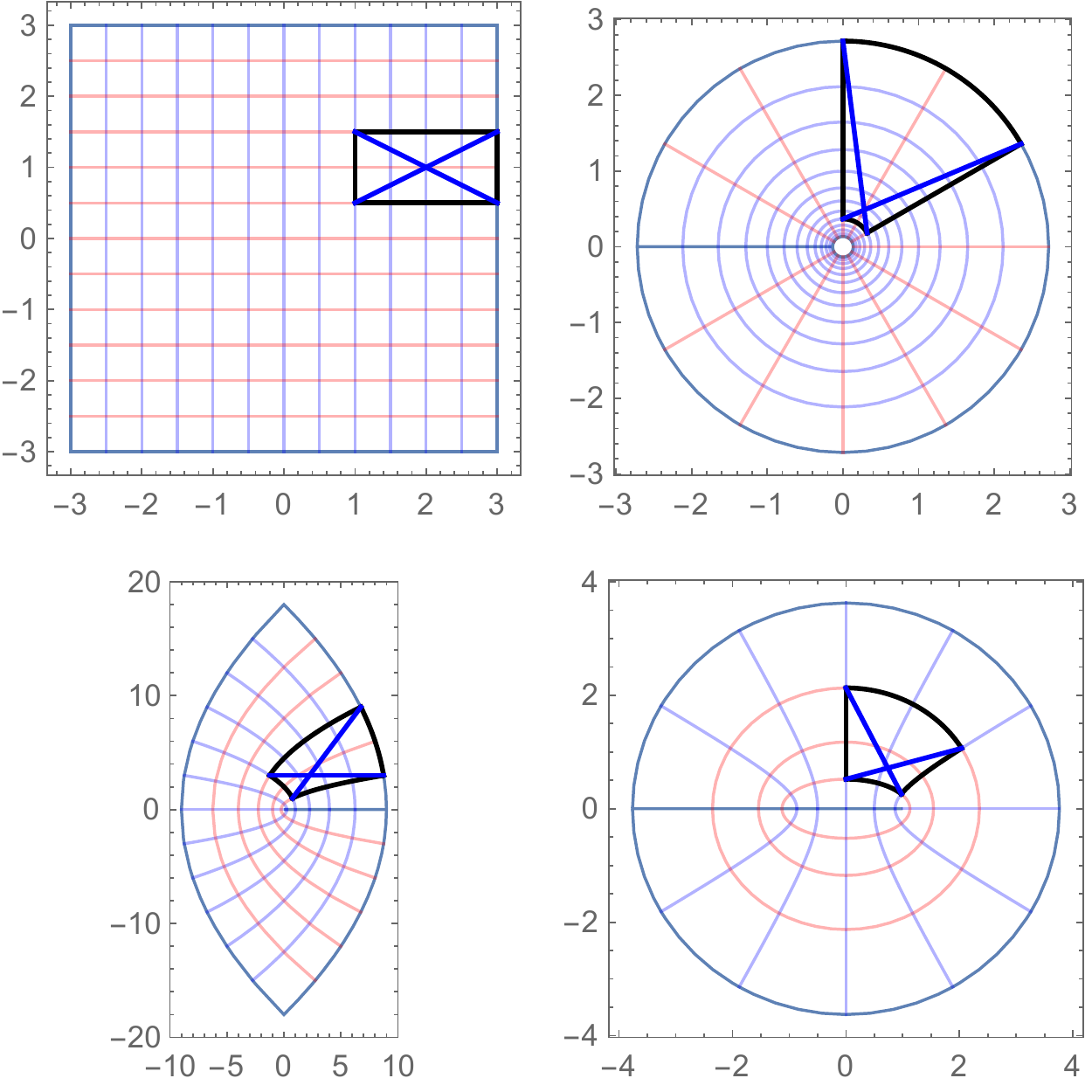} 
\caption{Isothermal {\sc Liouville} coordinates on the plane: the two (straight line) geodesic diagonals in each rectangle have the same length, according to {\sc Ivory}'s theorem}
\label{fig:plane_li_3}
\end{figure}

If we take a parameter line rectangle in one of the four isothermal {\sc Liouville} coordinates, then we can prove a statement about the energies of the discrete diagonals of the rectangle, in the sense of discrete differential geometry. We take the common interval of definition of the diagonals and partition it uniformly in $k$ pieces. Then we construct polygons approximating the diagonals. We can show that for each $k \in \N$ the two polygons have the same energy, see figure \ref{fig:plane_li_4}.
\begin{figure}[H]
\centering
\includegraphics[scale=0.5]{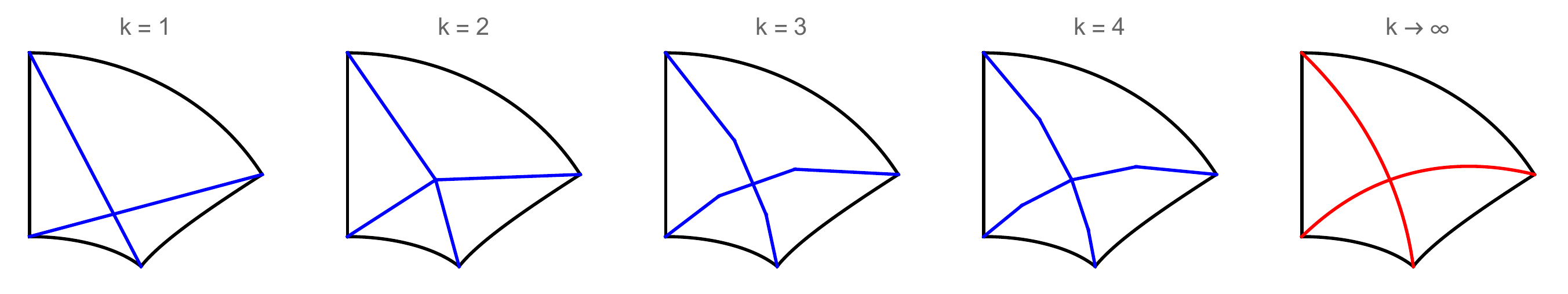} 
\caption{For each $k$ the two (discrete) diagonals have the same energy}
\label{fig:plane_li_4}
\end{figure}
Here $k = 1$ corresponds to the theorem of {\sc Ivory} in the plane because the two diagonals are geodesics and then their length and energy are related by the {\sc Schwarz} inequality. They not only have the same energy (the energy of a segment equals the squared length) in this case, but also the same length.

The limiting case $k \to \infty$ corresponds to the main theorem \ref{main_thm} of this work. All this can be viewed as a generalization of {\sc Ivory}'s theorem (see \cite{bar1}).

\subsection{Plane orthogonal {\sc Liouville} maps}

An example for a plane orthogonal {\sc Liouville} map in $U_1(u)$ and $V_2(v)$ which is also of {\sc Stäckel} type can be seen in figure \ref{fig:plane_li_5}.
\begin{figure}[H]
\centering
\includegraphics[scale=0.5]{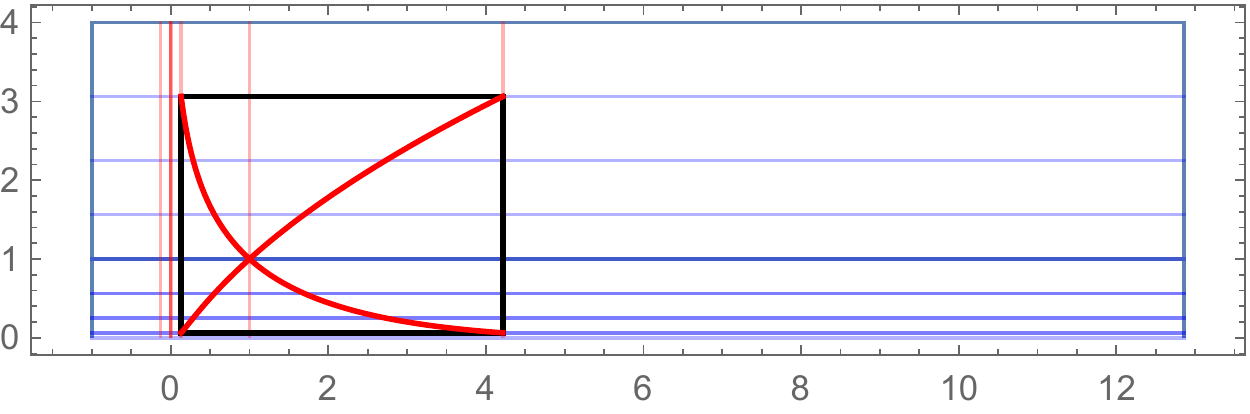} 
\caption{Orthogonal {\sc Liouville} map}
\label{fig:plane_li_5}
\end{figure}
The statement about the energies of the discrete diagonals also holds in this case, see figure \ref{fig:plane_li_6}.
\begin{figure}[H]
\centering
\includegraphics[scale=0.5]{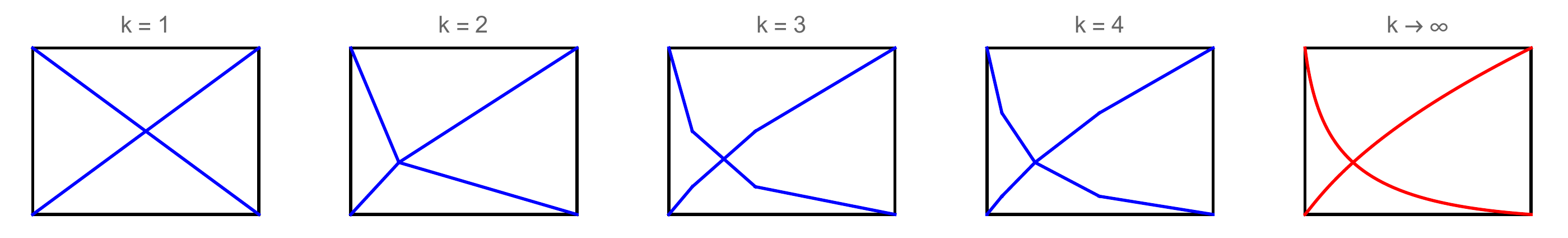} 
\caption{For each $k$ the two (discrete) diagonals have the same energy}
\label{fig:plane_li_6}
\end{figure}
For $k=1$ we have the theorem of {\sc Ivory} and for each value of $k$ the two diagonals have the same energy. The generalization of {\sc Ivory}'s theorem is valid in this case.

I conjecture that this generalization of {\sc Ivory}'s theorem also holds in parameter line rectangles on {\sc Liouville} surfaces. Instead of straight lines, there we have to consider polygons formed with segments of geodesics on the surface.

\section{Energy of the diagonals}

\subsection{Definitions}

The length $L(p)$ and the energy $E(p)$ of a curve $p : [a, b] \rightarrow M$ in a Riemannian manifold $M$ are given by the following expressions (see \cite{doc2}, (Chapter 9, p. 194)) and they are related by the {\sc Schwarz} inequality (with equality if and only if $\left|\dot{p}(t)\right|$ is constant, that means $p(t)$ is parametrized proportionally to arc length):
\begin{align*} 
L(p)&=\int_a^b{\left|\dot{p}(t)\right|  }{\dif t} = \int_a^b{\sqrt{\dot{p}(t) \cdot \dot{p}(t)}}{\dif t}\\
E(p)&=\int_a^b{\left|\dot{p}(t)\right|^2}{\dif t} = \int_a^b{      \dot{p}(t) \cdot \dot{p}(t) }{\dif t}\\
L^2(p) &\le (b - a) E(p)
\end{align*}
where $\dot{p}(t)$ is the tangent vector of the curve and $\_ \cdot \_$ is the scalar product in $TM$. 

\begin{remark}
Let's assume that $\left|\dot{p}(t)\right| = q =\text{const.}$ Then we have:
\begin{align*} 
L^2(p) = \left(\int_a^b q {\dif t}\right)^2 = (b-a)^2 q^2
= (b - a) \int_a^b q^2 {\dif t} = (b - a) E(p)
\end{align*}
The length and energy have the expressions:
\begin{align*} 
L(p) = (b-a) q \quad \text{and} \quad E(p) = (b - a) q^2
\end{align*}
If $0 < q < 1$ then $L(p) > E(p)$. If $1 < q$ then $L(p) < E(p)$.

For an arc length parametrized geodesic $\left|\dot{\gamma}(t)\right| = q = 1$ we have equality:
\begin{align*} 
L(\gamma) = (b-a) = E(\gamma)
\end{align*}
Thus the geodesic $\gamma(t)$ is (at the same time!) energy and length minimizing.
\end{remark}
If the manifold $M$ is the {\sc Euclid}ean plane, we can construct a discretization as follows: we partition the interval $[a, b]$ uniformly in $m$ pieces:
\begin{align*} 
a < a + 1 \frac{b - a}{m} <  a + 2 \frac{b - a}{m} < \cdots <  a + (m - 1) \frac{b - a}{m} <  b 
\end{align*}
The discrete length is then given by:
\begin{align*} 
L(p, m) = \sum_{k = 1}^{m} \left| p \left(a + k \frac{b - a}{m} \right) - 
                                  p \left(a + (k - 1) \frac{b - a}{m} \right) \right|
\end{align*}
By taking a limit for $m \to \infty$ we get:
\begin{align*} 
\lim_{m \to \infty} L(p, m) &= \lim_{m \to \infty} \sum_{k = 1}^{m} \frac{\left| p \left(a + k \frac{b - a}{m} \right) - p \left(a + (k - 1) \frac{b - a}{m} \right) \right|}{\frac{b - a}{m}} \cdot \frac{b - a}{m} \\
&= \lim_{m \to \infty} \sum_{k = 1}^{m} \left| \frac{p \left(t_{k-1} + \Delta t 
\right) - p \left( t_{k-1} \right)}{\Delta t} \right| \cdot \Delta t = \int_a^b{\left|\dot{p}(t)\right|  }{\dif t} = L(p)
\end{align*}

The discrete energy is given by:
\begin{align*} 
E(p, m) = \sum_{k = 1}^{m} \frac{\left( p \left(a + k \frac{b - a}{m} \right) - p \left(a + (k - 1) \frac{b - a}{m} \right) \right)^2}{\frac{b - a}{m}}
\end{align*}
By taking a limit for $m \to \infty$ we get:
\begin{align*} 
\lim_{m \to \infty} E(p, m) &= \lim_{m \to \infty} \sum_{k = 1}^{m} \left( \frac{ p \left(a + k \frac{b - a}{m} \right) - p \left(a + (k - 1) \frac{b - a}{m} \right)}{\frac{b - a}{m}} \right)^2 \cdot \frac{b - a}{m} \\
&= \lim_{m \to \infty} \sum_{k = 1}^{m} \left( \frac{ p \left(t_{k-1} + \Delta t 
\right) - p \left( t_{k-1} \right) }{\Delta t} \right)^2 \cdot \Delta t = \int_a^b{\left|\dot{p}(t)\right|^2  }{\dif t} = E(p)
\end{align*}

This way we see that the discrete length and energy are consistent with the length and energy defined at the beginning of this section.

Let's compute the length $L$ and energy $E$ of a curve $p(t)$ on a surface $S$  with parametrization ${\bf x}(u, v)$. 

The curve is given by $p(t) = {\bf x}(u(t), v(t)) : [a, b] \stackrel{c}{\rightarrow} \R^2 \stackrel{{\bf x}}{\rightarrow} S \subset \R^n$  (here $n \in \{2,3\}$). First we compute $\dot{p}(t)$:
\begin{align*} 
\dot{p}(t) = {\bf x}_u(u(t), v(t))\dot{u}(t)+{\bf x}_v(u(t), v(t))\dot{v}(t)
\end{align*}
The scalar product $\dot{p}(t) \cdot \dot{p}(t)$ with itself is:
\begin{align*} 
\dot{p}(t) \cdot \dot{p}(t) &= 
{\bf x}_u(u(t), v(t)) \cdot {\bf x}_u(u(t), v(t)) (\dot{u}(t))^2 \\
&+ 2 {\bf x}_u(u(t), v(t)) \cdot {\bf x}_v(u(t), v(t)) \dot{u}(t) \dot{v}(t) \\
&+ {\bf x}_v(u(t), v(t)) \cdot {\bf x}_v(u(t), v(t)) (\dot{v}(t))^2 
\end{align*}
Now we see the connection with the metric tensor (line element) of $S$:
\begin{align*} 
\dot{p}(t) \cdot \dot{p}(t) = 
g_{11}(u(t), v(t)) (\dot{u}(t))^2 + 2 g_{12}(u(t), v(t))\dot{u}(t)\dot{v}(t) + g_{22}(u(t), v(t))(\dot{v}(t))^2
\end{align*}
With $c(t)=\begin{pmatrix}u(t), v(t)\end{pmatrix}^t$ and the matrix $G(c(t))$ of the metric tensor, we can write:
\begin{align*} 
\dot{p}(t) \cdot \dot{p}(t) = 
\dot{c}(t)^t G(c(t)) \dot{c}(t)
=
\begin{pmatrix}
\dot{u}(t), \dot{v}(t)
\end{pmatrix}
\begin{pmatrix}
g_{11}(u(t), v(t)) & g_{12}(u(t), v(t)) \\
g_{21}(u(t), v(t)) & g_{22}(u(t), v(t))
\end{pmatrix}
\begin{pmatrix}
\dot{u}(t) \\ \dot{v}(t)
\end{pmatrix}
\end{align*}
Now we can define the length and energy of a curve on a surface:
\begin{definition}[Length of a curve on a surface]
The length of a curve $p(t)={\bf x}(c(t))$ on the surface ${\bf x}(u, v)$ is given by:
\begin{align*} 
L({\bf x}(c(t)))=\int_a^b\sqrt{\dot{c}(t)^t G(c(t)) \dot{c}(t)}{\dif t}
\end{align*}
\end{definition}
\begin{definition}[Energy of a curve on a surface]
The energy of a curve $p(t)={\bf x}(c(t))$ on the surface ${\bf x}(u, v)$ is given by:
\begin{align*} 
E({\bf x}(c(t)))=\int_a^b\dot{c}(t)^t G(c(t)) \dot{c}(t) {\dif t}
\end{align*}
\end{definition}

\subsection{Main theorem}

Here we prove two theorems which together give the main theorem \ref{main_thm} of this work:
\begin{theorem}\label{main_thm_1}
If a surface ${\bf x}(u, v)$ has the following orthogonal {\sc Liouville} line element: 
\begin{align*}
{\dif s}^2 = ( U_1(u) + V_1(v) ) {\dif u}^2 + ( U_2(u) + V_2(v) ) {\dif v}^2 
\end{align*}
then the diagonals in each rectangle formed by parameter lines on the surface have the same energy.
\end{theorem}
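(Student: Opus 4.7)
The plan is to parametrize both diagonals linearly over a common interval $t \in [0,1]$ and then exploit the separated structure of the Liouville line element to show that the two energy integrals coincide term by term.

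Concretely, label the corners of the parameter rectangle $(u_1,v_1)$, $(u_2,v_1)$, $(u_2,v_2)$, $(u_1,v_2)$, and set $\Delta u = u_2 - u_1$, $\Delta v = v_2 - v_1$. I would parametrize the first diagonal by $c_1(t) = (u_1 + t\Delta u,\ v_1 + t\Delta v)$ and the second by $c_2(t) = (u_2 - t\Delta u,\ v_1 + t\Delta v)$. Since the metric is orthogonal ($g_{12} = 0$) and the velocities of both diagonals have $\dot u^2 = (\Delta u)^2$ and $\dot v^2 = (\Delta v)^2$, the energy integrand $\dot c^t G(c) \dot c$ reduces in each case to exactly
\begin{align*}
\bigl(U_1(u(t)) + V_1(v(t))\bigr)(\Delta u)^2 + \bigl(U_2(u(t)) + V_2(v(t))\bigr)(\Delta v)^2.
\end{align*}
Thanks to the Liouville splitting, this is a sum of four terms, each depending on either $u(t)$ alone or $v(t)$ alone.

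Next I would integrate each of these four terms separately over $[0,1]$ and apply the change of variables $u = u_1 + t\Delta u$ on the first diagonal versus $u = u_2 - t\Delta u$ on the second diagonal. Both substitutions transform $\int_0^1 U_i(u(t))\,\dif t$ into $\tfrac{1}{\Delta u}\int_{u_1}^{u_2} U_i(u)\,\dif u$, independently of the traversal direction, and the $V_i$-terms are literally identical on the two diagonals because $v(t)$ is the same function on both. Adding up the four contributions therefore gives the same expression for $E(c_1)$ and $E(c_2)$.

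There is essentially no obstacle; the proof is a direct computation, and the content of the theorem is precisely that the Liouville additive splitting $g_{11} = U_1 + V_1$, $g_{22} = U_2 + V_2$ together with the orthogonality ($g_{12}=0$) and the sign-insensitivity of the energy (it depends on $\dot u^2$, not on $\dot u$) are exactly the structural features needed. The only care point is to record that both diagonals sweep the $u$-interval $[u_1,u_2]$ and the $v$-interval $[v_1,v_2]$ with the same speeds up to sign, so that each of the four one-variable integrals comes out the same — this is where the argument would break down if a cross term $g_{12}\,\dot u\dot v$ were present, because that term changes sign between the two diagonals.
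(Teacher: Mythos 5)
Your proof is correct and rests on exactly the same structural facts as the paper's: the energy integrand depends only on $\dot u^2$ and $\dot v^2$ (so reversing the $u$-direction is invisible), the metric is diagonal, and the Liouville splitting separates the integrand into one-variable pieces. The paper packages this slightly differently: it centers the rectangle at $(u_0,v_0)$, parametrizes both diagonals over $[-1,1]$, and shows that the \emph{difference} of the two integrands is an odd function of $t$, so its integral over the symmetric interval vanishes; you instead evaluate each of the four separated integrals directly by the substitutions $u=u_1+t\Delta u$ and $u=u_2-t\Delta u$. The two arguments are equivalent, but yours has the small bonus of producing an explicit closed form for the common energy, namely
\begin{align*}
E = \Delta u\int_{u_1}^{u_2}U_1(u)\,\dif u \;+\; \frac{(\Delta u)^2}{\Delta v}\int_{v_1}^{v_2}V_1(v)\,\dif v \;+\; \frac{(\Delta v)^2}{\Delta u}\int_{u_1}^{u_2}U_2(u)\,\dif u \;+\; \Delta v\int_{v_1}^{v_2}V_2(v)\,\dif v,
\end{align*}
(with the obvious degenerate reading when $\Delta u=0$ or $\Delta v=0$), whereas the parity argument only certifies that the difference is zero. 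Your closing remark about the cross term $g_{12}\,\dot u\dot v$ changing sign between the diagonals is exactly the mechanism the paper exploits in the converse direction (Theorem \ref{main_thm_2}).
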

\begin{proof} We proceed in several steps:
\begin{enumerate}
\item We construct a rectangle $ABCD$ in the parameter domain $\left]u_{min}, u_{max}\right[ \times \left]v_{min}, v_{max}\right[$ of the surface ${\bf x}(u, v)$. First we take a point $M = (u_0, v_0)^t$ as the center of the rectangle. Second, we use a vector $\delta = (\alpha, \beta)^t$ (with $\alpha \leq 0$, $\beta \leq 0$) and set $A = M + \delta$. Then the first diagonal $CA$ of rectangle $ABCD$ has the parametrization $d_1(t) = M + t \delta$ with $t \in [-1, 1]$: for $t=-1$ we get the point $C$, for $t=0$ the point $M$ and for $t=1$ the point $A$. 
For the second diagonal we use the vector $\cj{\delta} = (-\alpha, \beta)^t$. The second diagonal $DB$ of rectangle $ABCD$ has the parametrization $d_2(t) = M + t \cj{\delta}$ with $t \in [-1, 1]$: for $t=-1$ we get the point $D$, for $t=0$ the point $M$ and for $t=1$ the point $B$. 
In choosing $M$ and $\delta$, we must be careful to ensure that $M$, $A$ and $C$ stay inside the definition domain $\left]u_{min}, u_{max}\right[ \times \left]v_{min}, v_{max}\right[$.
Then the sides of this rectangle $ABCD$ are parameter lines in the definition domain.

\item For the diagonals on the surface we have:
\begin{align*} 
q_1(t) &= 
\dot{d_1}(t)^t G(d_1(t)) \dot{d_1}(t) \\
&= 
\begin{pmatrix}
\alpha, \beta
\end{pmatrix}
\begin{pmatrix}
U_1(u_0 + t \alpha) + V_1(v_0 + t \beta) & 0 \\
0 &  U_2(u_0 + t \alpha) + V_2(v_0 + t \beta)
\end{pmatrix}
\begin{pmatrix}
\alpha \\ \beta
\end{pmatrix}\\
&= \alpha^2 (U_1(u_0 + t \alpha) + V_1(v_0 + t \beta)) + \beta^2 (U_2(u_0 + t \alpha) + V_2(v_0 + t \beta))
\end{align*}
and
\begin{align*} 
q_2(t) &= 
\dot{d_2}(t)^t G(d_2(t)) \dot{d_2}(t) \\
&= 
\begin{pmatrix}
-\alpha, \beta
\end{pmatrix}
\begin{pmatrix}
U_1(u_0 - t \alpha) + V_1(v_0 + t \beta) & 0 \\
0 &  U_2(u_0 - t \alpha) + V_2(v_0 + t \beta)
\end{pmatrix}
\begin{pmatrix}
-\alpha \\ \beta
\end{pmatrix}\\
&= \alpha^2 (U_1(u_0 - t \alpha) + V_1(v_0 + t \beta)) + \beta^2 (U_2(u_0 - t \alpha) + V_2(v_0 + t \beta))
\end{align*}

\item We show that the function $f(t) = q_1(t) - q_2(t)$ is an odd function of $t$, that means $f(-t) = -f(t)$:
\begin{align*} 
f(-t) =& q_1(-t) - q_2(-t) \\ 
      =& \alpha^2 (U_1(u_0 - t \alpha) + V_1(v_0 - t \beta)) + \beta^2 (U_2(u_0 - t \alpha) + V_2(v_0 - t \beta)) \\
       &- \alpha^2 (U_1(u_0 + t \alpha) + V_1(v_0 - t \beta)) - \beta^2 (U_2(u_0 + t \alpha) + V_2(v_0 - t \beta)) \\
      =& \alpha^2 U_1(u_0 - t \alpha) + \beta^2 U_2(u_0 - t \alpha) 
       - \alpha^2 U_1(u_0 + t \alpha) - \beta^2 U_2(u_0 + t \alpha) \\      
      =& \alpha^2 (U_1(u_0 - t \alpha) + V_1(v_0 + t \beta)) + \beta^2 (U_2(u_0 - t \alpha) + V_2(v_0 + t \beta)) \\
       &- \alpha^2 (U_1(u_0 + t \alpha) + V_1(v_0 + t \beta)) + \beta^2 (U_2(u_0 + t \alpha) + V_2(v_0 + t \beta)) \\
      =& q_2(t) - q_1(t) = -f(t)
\end{align*}

\item For the energies of the diagonals ${\bf x}(d_1(t))$ and ${\bf x}(d_2(t))$ on the surface we have:
\begin{align*} 
E({\bf x}(d_1(t))) - E({\bf x}(d_2(t))) &= \int_{-1}^{+1} q_1(t) {\dif t} - \int_{-1}^{+1} q_2(t) {\dif t} \\
&= \int_{-1}^{+1} q_1(t) - q_2(t) {\dif t} = \int_{-1}^{+1} f(t) {\dif t} = 0
\end{align*}
\end{enumerate}
We see that $E({\bf x}(d_1(t))) = E({\bf x}(d_2(t)))$ and we are done.
\end{proof}

\begin{theorem}\label{main_thm_2}
If the diagonals in each rectangle formed by parameter lines on a surface ${\bf x}(u, v)$ have the same energy, then the surface has the following orthogonal {\sc Liouville} line element: 
\begin{align*}
{\dif s}^2 = ( U_1(u) + V_1(v) ) {\dif u}^2 + ( U_2(u) + V_2(v) ) {\dif v}^2 
\end{align*}
\end{theorem}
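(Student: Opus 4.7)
\emph{Proof plan.} The hypothesis $E(\mathbf{x}(d_1)) = E(\mathbf{x}(d_2))$ must hold for every sufficiently small rectangle $ABCD$ centered at an arbitrary interior point $M=(u_0,v_0)$ with arbitrary half-diagonal $\delta=(\alpha,\beta)$. The plan is to redo the computation from the proof of Theorem \ref{main_thm_1} \emph{without} presupposing that the metric is diagonal: writing $(g_{ij})$ for the entries of $G$, the bilinear forms $q_1(t)$ and $q_2(t)$ now acquire cross terms $\pm 2\alpha\beta\,g_{12}$. After the substitution $t\mapsto -t$ in the integral for $q_2$, the equal-energy condition $\int_{-1}^{1}(q_1-q_2)\dif t = 0$ rearranges into
\begin{align*}
0 = \alpha^2 F_{11}(u_0,v_0,\alpha,\beta) + 2\alpha\beta\, F_{12}(u_0,v_0,\alpha,\beta) + \beta^2 F_{22}(u_0,v_0,\alpha,\beta),
\end{align*}
where
\begin{align*}
F_{11} &= \int_{-1}^{1}\bigl[g_{11}(u_0+t\alpha,v_0+t\beta) - g_{11}(u_0+t\alpha,v_0-t\beta)\bigr]\dif t, \\
F_{12} &= \int_{-1}^{1}\bigl[g_{12}(u_0+t\alpha,v_0+t\beta) + g_{12}(u_0+t\alpha,v_0-t\beta)\bigr]\dif t,
\end{align*}
and $F_{22}$ is defined like $F_{11}$ with $g_{22}$ replacing $g_{11}$.

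Next I would Taylor-expand the integrands around $(\alpha,\beta)=(0,0)$ and match coefficients in the resulting power-series identity, which has to hold for all small $(\alpha,\beta)$ at every interior $(u_0,v_0)$. A parity check shows that $F_{11}$ and $F_{22}$ contain only monomials $\alpha^m\beta^n$ with both $m$ and $n$ odd, whereas $F_{12}$ contains only monomials with both $m$ and $n$ even; in particular $F_{12}(u_0,v_0,0,0) = 4\,g_{12}(u_0,v_0)$. Consequently the lowest-order term of the identity is the coefficient of $\alpha\beta$, equal to $8\,g_{12}(u_0,v_0)$, and its vanishing at every point yields $g_{12}\equiv 0$; the parametrization is therefore orthogonal.

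With $g_{12}\equiv 0$ the middle summand drops out globally and the identity reduces to $\alpha^2 F_{11} + \beta^2 F_{22} = 0$. The coefficient of $\alpha^3\beta$ is then contributed solely by $\alpha^2 F_{11}$ (since $\beta^2 F_{22}$ starts at $\alpha\beta^3$), and a brief calculation shows it equals $\tfrac{4}{3}\,\partial_u\partial_v g_{11}(u_0,v_0)$; symmetrically, $[\alpha\beta^3]$ gives $\tfrac{4}{3}\,\partial_u\partial_v g_{22}(u_0,v_0)$. Both quantities vanishing at every point is exactly the integrability condition for the splittings $g_{11}(u,v) = U_1(u) + V_1(v)$ and $g_{22}(u,v) = U_2(u) + V_2(v)$, which is the orthogonal Liouville line element claimed.

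The main obstacle is sequencing the deductions correctly: before we know $g_{12}\equiv 0$, the coefficient of $\alpha^3\beta$ is actually $\tfrac{4}{3}\bigl(\partial_u\partial_v g_{11} + \partial_u^2 g_{12}\bigr)$, the second summand coming from the $\alpha^2$-term of $F_{12}$ picked up by the prefactor $2\alpha\beta$, so the mixed derivative of $g_{11}$ can only be isolated after $g_{12}$ has been killed off globally by the $[\alpha\beta]$ coefficient. A secondary technical point is that the argument requires the $g_{ij}$ to be sufficiently smooth to justify the power-series step; if one wishes to avoid that, the same conclusions follow by differentiating the integral identity in $(\alpha,\beta)$ finitely many times at the origin and invoking the arbitrariness of $(u_0,v_0)$.
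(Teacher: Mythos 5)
Your proposal is correct, and it reaches the paper's three key pointwise conditions ($g_{12}\equiv 0$, $\partial_u\partial_v g_{11}\equiv 0$, $\partial_u\partial_v g_{22}\equiv 0$) by a genuinely different localization. The paper fixes two special rectangle shapes — first squares $\delta=(\alpha,\alpha)$, then thin rectangles $\delta=(\alpha,\alpha\varepsilon)$ — asserts that equal energies makes $q_1-q_2$ odd in $t$, and differentiates the resulting identity $f(-t)+f(t)=0$ twice at $t=0$; the square gives $g_{12}=0$ and the combined relation (\ref{g22_from_g11}), and the $\varepsilon$-rectangle is needed as a second input to disentangle $\partial_u\partial_v g_{11}$ from $\partial_u\partial_v g_{22}$. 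You instead keep a general half-diagonal $(\alpha,\beta)$, integrate out $t$ first, and expand in $(\alpha,\beta)$ at the origin, so the two mixed derivatives decouple automatically as the coefficients of the distinct monomials $\alpha^3\beta$ and $\alpha\beta^3$ (I checked the constants: $8g_{12}$ at order $\alpha\beta$ and $\tfrac{4}{3}\partial_u\partial_v g_{ii}$ are right, as is the $\tfrac{4}{3}\partial_u^2 g_{12}$ contamination you flag). Your route buys two things: it needs only one rectangle family, and — more importantly — it uses only what the hypothesis literally gives, namely $\int_{-1}^{1}(q_1-q_2)\,\dif t=0$, whereas the paper's step ``equal energies implies $f_1$ is odd'' is a non sequitur as written (oddness does not follow from the vanishing of one integral; it can be rescued by also shrinking the rectangle about the same center and differentiating in the scale parameter, but the paper does not say this). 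The paper's version is in exchange more elementary: two explicit second derivatives at $t=0$ rather than bookkeeping of a two-variable Taylor expansion and its parity structure.
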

\begin{proof}
We need several steps:
\begin{enumerate}
\item We construct a square $A_1 B_1 C_1 D_1$ in the parameter domain $\left]u_{min}, u_{max}\right[ \times \left]v_{min}, v_{max}\right[$ of the surface ${\bf x}(u, v)$. First we take the point $M = (u_0, v_0)^t$ as the center of the square. Second, we use a vector $\delta = (\alpha, \alpha)^t$ (with $\alpha < 0$) and set $A_1 = M + \delta$. Then the first diagonal $C_1 A_1$ of the square $A_1 B_1 C_1 D_1$ has the parametrization $d_1(t) = M + t \delta$ with $t \in [-1, 1]$. 
For the second diagonal we use the vector $\cj{\delta} = (-\alpha, \alpha)^t$. The second diagonal $D_1 B_1$ of square $A_1 B_1 C_1 D_1$ has the parametrization $d_2(t) = M + t \cj{\delta}$ with $t \in [-1, 1]$. 
In choosing $M$ and $\delta$, we must be careful to ensure that $M$, $A_1$ and $C_1$ stay inside the definition domain $\left]u_{min}, u_{max}\right[ \times \left]v_{min}, v_{max}\right[$.
Then the sides of this square $A_1 B_1 C_1 D_1$ are parameter lines in the definition domain.

\item The metric tensor of the surface has the general form:
\begin{align*}
G(u,v) =
\begin{pmatrix}
g_{11}(u,v) & g_{12}(u,v) \\
g_{12}(u,v) & g_{22}(u,v) 
\end{pmatrix}
\end{align*}
and we compute $q_1(t)$ and $q_2(t)$:
\begin{align*} 
q_1(t) &= 
\dot{d_1}(t)^t G(d_1(t)) \dot{d_1}(t) \\
&= 
\begin{pmatrix}
\alpha, \alpha
\end{pmatrix}
\begin{pmatrix}
g_{11}(u_0 + t \alpha, v_0 + t \alpha) & g_{12}(u_0 + t \alpha, v_0 + t \alpha) \\
g_{12}(u_0 + t \alpha, v_0 + t \alpha) & g_{22}(u_0 + t \alpha, v_0 + t \alpha)
\end{pmatrix}
\begin{pmatrix}
\alpha \\ \alpha
\end{pmatrix}\\
&= \alpha^2 (g_{11}(u_0 + t \alpha, v_0 + t \alpha) + 2 g_{12}(u_0 + t \alpha, v_0 + t \alpha) + g_{22}(u_0 + t \alpha, v_0 + t \alpha))
\end{align*}
and
\begin{align*} 
q_2(t) &= 
\dot{d_2}(t)^t G(d_2(t)) \dot{d_2}(t) \\
&= 
\begin{pmatrix}
-\alpha, \alpha
\end{pmatrix}
\begin{pmatrix}
g_{11}(u_0 - t \alpha, v_0 + t \alpha) & g_{12}(u_0 - t \alpha, v_0 + t \alpha) \\
g_{12}(u_0 - t \alpha, v_0 + t \alpha) & g_{22}(u_0 - t \alpha, v_0 + t \alpha)
\end{pmatrix}
\begin{pmatrix}
-\alpha \\ \alpha
\end{pmatrix}\\
&= \alpha^2 (g_{11}(u_0 - t \alpha, v_0 + t \alpha) - 2 g_{12}(u_0 - t \alpha, v_0 + t \alpha) + g_{22}(u_0 - t \alpha, v_0 + t \alpha))
\end{align*}
\item We know that the energies of the diagonals are equal and this implies that the function $f_1(t) = q_1(t) - q_2(t)$ is odd. Then the expression $f_1(-t) + f_1(t) = 0$ is the constant zero function.
For the particular value $t = 0$, $f_1(0) = q_1(0) - q_2(0) = 0$.
\begin{align*}
0 = f_1(0) = &\alpha^2 (g_{11}(u_0, v_0) + 2 g_{12}(u_0, v_0) + g_{22}(u_0, v_0)) \\
      &-\alpha^2 (g_{11}(u_0, v_0) - 2 g_{12}(u_0, v_0) + g_{22}(u_0, v_0)) = 4 \alpha^2 g_{12}(u_0, v_0)
\end{align*}
Therefore $g_{12}(u_0, v_0) = 0$ on the whole domain of definition, because $\alpha \neq 0$ and we can choose the point $(u_0, v_0)^t$ freely on the domain of definition. In what follows, we will use $g_{12}(u, v) = 0$ (the line element is orthogonal).
\item For $f_1(t)$ we now have (with $g_{12}(u, v) = 0$):
\begin{align*}
f_1(t) = &\alpha^2 (g_{11}(u_0 + t \alpha, v_0 + t \alpha) + g_{22}(u_0 + t \alpha, v_0 + t \alpha)\\
      &- g_{11}(u_0 - t \alpha, v_0 + t \alpha) - g_{22}(u_0 - t \alpha, v_0 + t \alpha)) 
\end{align*}
And we know that $c_1(t) := f_1(-t) + f_1(t) = 0$ is the constant zero function:
\begin{align*}
f_1(-t) + f_1(t) = &\alpha^2 (g_{11}(u_0 - t \alpha, v_0 - t \alpha) + g_{22}(u_0 - t \alpha, v_0 - t \alpha)\\
      &- g_{11}(u_0 + t \alpha, v_0 - t \alpha) - g_{22}(u_0 + t \alpha, v_0 - t \alpha)) \\
      &+ \alpha^2 (g_{11}(u_0 + t \alpha, v_0 + t \alpha) + g_{22}(u_0 + t \alpha, v_0 + t \alpha)\\
      &- g_{11}(u_0 - t \alpha, v_0 + t \alpha) - g_{22}(u_0 - t \alpha, v_0 + t \alpha)) 
\end{align*}
Therefore also the derivative $\frac{{\dif c_1}(t)}{\dif t} = 0$ with respect to $t$ is the constant zero function.
We can take another derivative $\frac{\dif^2 c_1(t)}{\dif t^2} = 0$ and know that it still is the constant zero function for all values of $t$. For $t=0$ we get 
\begin{align*}
0 = \frac{\dif^2 c_1(0)}{\dif t^2} 
  = 8 \alpha ^4 
  \left(
  \frac{\partial ^2g_{11}\left(u_0,v_0\right)}{\partial u\, \partial v}+
  \frac{\partial ^2g_{22}\left(u_0,v_0\right)}{\partial u\, \partial v}
  \right)
\end{align*}
Because $\alpha \neq 0$ we have the following relation between $g_{11}(u, v)$ and $g_{22}(u, v)$: 
\begin{align}
 \frac{\partial ^2g_{22}\left(u, v\right)}{\partial u\, \partial v} = 
-\frac{\partial ^2g_{11}\left(u, v\right)}{\partial u\, \partial v} \label{g22_from_g11}
\end{align}

\item We construct a rectangle $A_2 B_2 C_2 D_2$ in the parameter domain $\left]u_{min}, u_{max}\right[ \times \left]v_{min}, v_{max}\right[$ of the surface ${\bf x}(u, v)$. First we take the point $M = (u_0, v_0)^t$ as the center of the rectangle. Second, we use a vector $\delta = (\alpha, \alpha \varepsilon)^t$ (with $\alpha < 0$, $0 < \varepsilon < 1$) and set $A_2 = M + \delta$. Then the first diagonal $C_2 A_2$ of rectangle $A_2 B_2 C_2 D_2$ has the parametrization $d_3(t) = M + t \delta$ with $t \in [-1, 1]$. 
For the second diagonal we use the vector $\cj{\delta} = (-\alpha, \alpha \varepsilon)^t$. The second diagonal $D_2 B_2$ of rectangle $A_2 B_2 C_2 D_2$ has the parametrization $d_4(t) = M + t \cj{\delta}$ with $t \in [-1, 1]$. 
In choosing $M$ and $\delta$, we must be careful to ensure that $M$, $A_2$ and $C_2$ stay inside the definition domain $\left]u_{min}, u_{max}\right[ \times \left]v_{min}, v_{max}\right[$.
Then the sides of this rectangle $A_2 B_2 C_2 D_2$ are parameter lines in the definition domain.

\item The metric tensor of the surface has the form (remember that $g_{12}(u, v) = 0$):
\begin{align*}
G(u,v) =
\begin{pmatrix}
g_{11}(u,v) & 0 \\
          0 & g_{22}(u,v) 
\end{pmatrix}
\end{align*}
and we compute $q_3(t)$ and $q_4(t)$:
\begin{align*} 
q_3(t) &= 
\dot{d_3}(t)^t G(d_3(t)) \dot{d_3}(t) \\
&= 
\begin{pmatrix}
\alpha, \alpha \varepsilon
\end{pmatrix}
\begin{pmatrix}
g_{11}(u_0 + t \alpha, v_0 + t \alpha \varepsilon) & 0 \\
                                     0 & g_{22}(u_0 + t \alpha, v_0 + t \alpha \varepsilon)
\end{pmatrix}
\begin{pmatrix}
\alpha \\ \alpha \varepsilon
\end{pmatrix}\\
&= \alpha^2 (g_{11}(u_0 + t \alpha, v_0 + t \alpha \varepsilon) + \varepsilon^2 g_{22}(u_0 + t \alpha, v_0 + t \alpha \varepsilon))
\end{align*}
and
\begin{align*} 
q_4(t) &= 
\dot{d_4}(t)^t G(d_4(t)) \dot{d_4}(t) \\
&= 
\begin{pmatrix}
-\alpha, \alpha \varepsilon
\end{pmatrix}
\begin{pmatrix}
g_{11}(u_0 - t \alpha, v_0 + t \alpha \varepsilon) & 0 \\
                                     0 & g_{22}(u_0 - t \alpha, v_0 + t \alpha \varepsilon)
\end{pmatrix}
\begin{pmatrix}
-\alpha \\ \alpha \varepsilon
\end{pmatrix}\\
&= \alpha^2 (g_{11}(u_0 - t \alpha, v_0 + t \alpha \varepsilon) + \varepsilon^2 g_{22}(u_0 - t \alpha, v_0 + t \alpha \varepsilon))
\end{align*}

\item We know that the energies of the diagonals are equal and this implies that the function $f_2(t) = q_3(t) - q_4(t)$ is odd. Then the expression $f_2(-t) + f_2(t) = 0$ is the constant zero function.

\item For $f_2(t)$ we have:
\begin{align*}
f_2(t) = &\alpha^2 (g_{11}(u_0 + t \alpha, v_0 + t \alpha \varepsilon) + \varepsilon^2 g_{22}(u_0 + t \alpha, v_0 + t \alpha \varepsilon)) \\
      &-\alpha^2 (g_{11}(u_0 - t \alpha, v_0 + t \alpha \varepsilon) + \varepsilon^2 g_{22}(u_0 - t \alpha, v_0 + t \alpha \varepsilon))
\end{align*}
And we know that $c_2(t) := f_2(-t) + f_2(t) = 0$.
\begin{align*}
c_2(t) =& \alpha^2 (
  \varepsilon^2 g_{22}\left(u_0 - t \alpha, v_0 - t \alpha \varepsilon\right)-
  \varepsilon^2 g_{22}\left(u_0 - t \alpha, v_0 + t \alpha \varepsilon\right)\\
&-\varepsilon^2 g_{22}\left(u_0 + t \alpha, v_0 - t \alpha \varepsilon\right)+
  \varepsilon^2 g_{22}\left(u_0 + t \alpha, v_0 + t \alpha \varepsilon\right)\\
&+g_{11}\left(u_0 - t \alpha, v_0 - t \alpha \varepsilon\right)
 -g_{11}\left(u_0 - t \alpha, v_0 + t \alpha \varepsilon\right)\\
&-g_{11}\left(u_0 + t \alpha, v_0 - t \alpha \varepsilon\right)
 +g_{11}\left(u_0 + t \alpha, v_0 + t \alpha \varepsilon\right)
)
\end{align*}
Therefore also the derivative $\frac{{\dif c_2}(t)}{\dif t} = 0$ with respect to $t$ is the constant zero function.
We can take another derivative $\frac{\dif^2 c_2(t)}{\dif t^2} = 0$ and know that it still is the constant zero function for all values of $t$. We use relation (\ref{g22_from_g11}) and we get for $t = 0$: 
\begin{align*}
0 = \frac{\dif^2 c_2(0)}{\dif t^2} 
  = 8 \alpha^4 \varepsilon \left(1 - \varepsilon^2\right) \frac{\partial ^2g_{11}\left(u_0,v_0\right)}{\partial u\,
     \partial v}
\end{align*}
Because $\alpha \neq 0$ and $0 < \varepsilon < 1$ we have (combined with (\ref{g22_from_g11})): 
\begin{align*}
 \frac{\partial ^2g_{11}\left(u, v\right)}{\partial u\, \partial v} = 0 \quad \text{and} \quad
 \frac{\partial ^2g_{22}\left(u, v\right)}{\partial u\, \partial v} = 0
\end{align*}
\item By solving each of these differential equations separately with respect to $u$ and $v$ we get:
\begin{align*}
g_{11}(u, v) = U_1(u) + V_1(v) \quad \text{and} \quad g_{22}(u, v) = U_2(u) + V_2(v)  
\end{align*}
\end{enumerate}
And we are done.
\end{proof}

The previous two theorems combined give together the following main theorem of this work:
\begin{theorem}[Main theorem]\label{main_thm}
If and only if a surface ${\bf x}(u, v)$ has the following orthogonal {\sc Liouville} line element: 
\begin{align*}
{\dif s}^2 = ( U_1(u) + V_1(v) ) {\dif u}^2 + ( U_2(u) + V_2(v) ) {\dif v}^2 
\end{align*}
the diagonals in each rectangle formed by parameter lines on the surface have the same energy.
\end{theorem}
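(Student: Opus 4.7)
The plan is to treat the biconditional as the conjunction of its two implications and to read each off directly from the two theorems just proved. No fresh computation is required; the substance of the main theorem is entirely contained in Theorems~\ref{main_thm_1} and~\ref{main_thm_2}, which are exactly the ``$\Leftarrow$'' and ``$\Rightarrow$'' halves of the statement.

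For the direction ``orthogonal {\sc Liouville} line element implies equal diagonal energies'' I would invoke Theorem~\ref{main_thm_1}. The key point there was that, in a rectangle with center $M = (u_0,v_0)^t$ and half-diagonal vector $\delta = (\alpha,\beta)^t$, the integrand $f(t) = q_1(t) - q_2(t)$ of the energy difference of the two diagonals is an odd function of $t$ on $[-1,1]$; its integral therefore vanishes. This works precisely because the metric coefficients split additively as $U_i(u) + V_i(v)$, so the $V_i$ contributions drop out of $f$ by the sign pattern produced when $\alpha$ is replaced by $-\alpha$.

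For the direction ``equal diagonal energies implies orthogonal {\sc Liouville} line element'' I would invoke Theorem~\ref{main_thm_2}. That argument is slightly more delicate: first a square rectangle is used to extract $g_{12} \equiv 0$ from $f_1(0) = 0$, and then a genuinely non-square rectangle with aspect ratio $\varepsilon \in (0,1)$ is used together with the second derivative at $t = 0$ of $c_2(t) = f_2(-t) + f_2(t)$ to force both $\partial_u\partial_v g_{11}$ and $\partial_u\partial_v g_{22}$ to vanish. Integrating those two mixed-partial equations separately in $u$ and $v$ then yields the additive decomposition $g_{11} = U_1(u) + V_1(v)$ and $g_{22} = U_2(u) + V_2(v)$.

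Putting these two halves together is the entire proof. There is no real obstacle here; the substantive work has already been carried out in Theorems~\ref{main_thm_1} and~\ref{main_thm_2}, and the main theorem is obtained simply by observing that they are converses of one another and hence jointly give the stated biconditional.
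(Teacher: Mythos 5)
Your proposal matches the paper exactly: the paper obtains the main theorem by simply combining Theorem~\ref{main_thm_1} (the ``if'' direction) and Theorem~\ref{main_thm_2} (the ``only if'' direction), just as you do, and your summaries of the odd-integrand argument and of the square/non-square rectangle argument accurately reflect the proofs of those two theorems. No gap here.
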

\begin{corollary}
It immediately follows that for all special cases of this line element the diagonals have the same energy.
\end{corollary}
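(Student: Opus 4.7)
The plan is to argue that the corollary is a direct specialisation of the forward (sufficiency) direction of the main theorem, namely Theorem \ref{main_thm_1}. By a \emph{special case} of the orthogonal Liouville line element we mean any line element obtained from
\begin{align*}
{\dif s}^2 = (U_1(u) + V_1(v)){\dif u}^2 + (U_2(u) + V_2(v)){\dif v}^2
\end{align*}
by specifying the four functions $U_1, V_1, U_2, V_2$ in some way (for instance, the classical isothermal Liouville case $U_1 = U_2$ and $V_1 = V_2$; or setting some of the summands identically to zero or to a constant; or imposing $U_1 \equiv U_2 \equiv 0$, which yields a line element depending on $v$ alone; and so on). In every such case the resulting line element still has the shape of a sum of a pure $u$-function and a pure $v$-function in each diagonal entry of $G$, with vanishing off-diagonal entry; that is, it still belongs to the family to which Theorem \ref{main_thm_1} applies.

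Concretely, I would first observe that the hypothesis of Theorem \ref{main_thm_1} is phrased as a universally quantified statement over the admissible functions $U_1, V_1, U_2, V_2$; any particular choice of these functions therefore satisfies the hypothesis \emph{a fortiori}. Next I would apply Theorem \ref{main_thm_1} to the specialised line element, which immediately yields the equality of energies of the two diagonals in every parameter-line rectangle. No new computation is required: the construction of the diagonals $d_1(t)$ and $d_2(t)$, the formulas for $q_1(t)$ and $q_2(t)$, and the odd-function argument for $f(t) = q_1(t) - q_2(t)$ all carry over verbatim, since they depend only on the structural form of the line element and not on the particular choice of the four functions.

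The only point that warrants a sentence of care is making precise what is meant by "special case", so as not to leave open the possibility that the reader interprets this as including line elements outside the orthogonal Liouville family. I would therefore make the statement explicit: any line element that arises as an instance of the orthogonal Liouville form by fixing the functions $U_1, V_1, U_2, V_2$ (possibly to zero, to constants, or subject to coincidences such as $U_1 = U_2$, $V_1 = V_2$) belongs to the hypothesis class of Theorem \ref{main_thm_1}. With this clarification, there is no substantive obstacle whatsoever; the corollary is a one-line consequence of the main theorem.
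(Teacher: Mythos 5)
Your proposal is correct and matches the paper's (implicit) reasoning exactly: the corollary is just the observation that every special case (isothermal {\sc Liouville}, the {\sc Clairaut} forms, the orthogonal {\sc Liouville} form in $U_1(u)$ and $V_2(v)$, etc.) is obtained by specialising the functions $U_1, V_1, U_2, V_2$ and therefore still satisfies the hypothesis of Theorem \ref{main_thm_1}. Your added care in pinning down what ``special case'' means is a slight improvement in precision but not a different argument.
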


\section{{\sc Liouville} surfaces}

In this section we define and name some special line elements or parametrizations of surfaces and give examples of {\sc Liouville} surfaces.

\subsection{Definitions}

Let a surface be given as a parametrization ${\bf x}(u, v) : \left]u_{min}, u_{max}\right[ \times \left]v_{min}, v_{max}\right[ \subset \R^2 \mapsto \R^n$ (in this work we have $n \in \{2,3\}$ but the theory is valid in any dimension).

\begin{definition}[Line element and first fundamental form]
We define the line element ${\dif s}$ of the surface ${\bf x}(u, v)$ by the first fundamental form $ds^2$: 
\begin{align}
{\dif s}^2 = g_{11}(u, v) {\dif u}^2 + 2 g_{12}(u, v) {\dif u} {\dif v} + g_{22}(u, v) {\dif v}^2
\end{align}
where 
$g_{11}(u, v) = {\bf x}_u(u, v) \boldsymbol{\cdot} {\bf x}_u(u, v)$, 
$g_{12}(u, v) = g_{21}(u, v) = {\bf x}_u(u, v) \boldsymbol{\cdot} {\bf x}_v(u, v)$
and
$g_{22}(u, v) = {\bf x}_v(u, v) \boldsymbol{\cdot} {\bf x}_v(u, v)$.
\end{definition}

When $g_{12} = 0$, the $u$-lines $v = \rm const.$ and $v$-lines $u = \rm const.$ are orthogonal to each other on the surface and we speak of an orthogonal parametrization:

\begin{definition}[Orthogonal parametrization]
When the line element ${\dif s}$ of the surface ${\bf x}(u, v)$ has the following form: 
\begin{align}
{\dif s}^2 = g_{11}(u, v) {\dif u}^2 + g_{22}(u, v) {\dif v}^2
\end{align}
with 
$g_{12}(u, v) = g_{21}(u, v) = 0$, the parametrization of the surface is called orthogonal.
\end{definition}

When $g_{11} = g_{22}$, $g_{12} = 0$ the parametrization is orthogonal and locally conformal (preserves angles between curves (their tangent vectors)):

\begin{definition}[Isothermal (or conformal) parametrization]
When the line element ${\dif s}$ of the surface ${\bf x}(u, v)$ has the following form: 
\begin{align}
{\dif s}^2 = g_{11}(u, v) ( {\dif u}^2 + {\dif v}^2 )
\end{align}
with 
$g_{11}(u, v) = g_{22}(u, v)$ and
$g_{12}(u, v) = g_{21}(u, v) = 0$, the parametrization of the surface is called isothermal.
\end{definition}

Now we can define {\sc Liouville} and {\sc Clairaut} parametrizations:

\begin{definition}[Orthogonal {\sc Liouville} parametrization]
When the line element ${\dif s}$ of the surface ${\bf x}(u, v)$ has the following form: 
\begin{align}
{\dif s}^2 = ( U_1(u) + V_1(v) ) {\dif u}^2 + ( U_2(u) + V_2(v) ) {\dif v}^2 
\end{align}
with 
$\frac{\partial^2}{\partial u \partial v} g_{11}(u, v) = \frac{\partial^2}{\partial u \partial v} g_{22}(u, v) = 0$ and
$g_{12}(u, v) = g_{21}(u, v) = 0$, the parametrization of the surface is called orthogonal {\sc Liouville} parametrization.
\end{definition}
A special case is (when $g_{11} = g_{22}$):

\begin{definition}[Isothermal (or classical) {\sc Liouville} parametrization]
When the line element ${\dif s}$ of the surface ${\bf x}(u, v)$ has the following form: 
\begin{align}
{\dif s}^2 = ( U(u) + V(v) ) ( {\dif u}^2 + {\dif v}^2 )
\end{align}
with 
$g_{11}(u, v) = g_{22}(u, v) = U(u) + V(v)$ and
$g_{12}(u, v) = g_{21}(u, v) = 0$, the parametrization of the surface is called isothermal {\sc Liouville} parametrization.
\end{definition}

The {\sc Liouville} parametrizations specialize further to {\sc Clairaut} parametrizations, when $g_{11}$ and $g_{22}$ depend only on $u$ (or on $v$):

\begin{definition}[Orthogonal {\sc Clairaut} parametrization in $u$]
When the line element ${\dif s}$ of the surface ${\bf x}(u, v)$ has the following form: 
\begin{align}
{\dif s}^2 = U_1(u) {\dif u}^2 + U_2(u) {\dif v}^2 
\end{align}
with 
$\frac{\partial}{\partial v} g_{11}(u, v) = \frac{\partial}{\partial v} g_{22}(u, v) = 0$ and
$g_{12}(u, v) = g_{21}(u, v) = 0$, the parametrization of the surface is called orthogonal {\sc Clairaut} parametrization in $u$. (The other case is that of the orthogonal {\sc Clairaut} parametrization in $v$.) 
\end{definition}

A special case is (when $g_{11} = g_{22} = U(u)$ or $g_{11} = g_{22} = V(v)$):

\begin{definition}[Isothermal {\sc Clairaut} parametrization in $u$]
When the line element ${\dif s}$ of the surface ${\bf x}(u, v)$ has the following form: 
\begin{align}
{\dif s}^2 = U(u) ( {\dif u}^2 + {\dif v}^2 )
\end{align}
with 
$g_{11}(u, v) = g_{22}(u, v) = U(u)$ and
$g_{12}(u, v) = g_{21}(u, v) = 0$, the parametrization of the surface is called isothermal {\sc Clairaut} parametrization in $u$. (The other case is that of the isothermal {\sc Clairaut} parametrization in $v$.)
\end{definition}

Other two special cases worth mentioning are:

\begin{definition}[Orthogonal {\sc Liouville} parametrization in $U_1(u)$ and $V_2(v)$]
When the line element ${\dif s}$ of the surface ${\bf x}(u, v)$ has the following form: 
\begin{align}
{\dif s}^2 = U_1(u) {\dif u}^2 + V_2(v) {\dif v}^2 
\end{align}
with 
$\frac{\partial}{\partial v} g_{11}(u, v) = \frac{\partial}{\partial u} g_{22}(u, v) = 0$ and
$g_{12}(u, v) = g_{21}(u, v) = 0$, the parametrization of the surface is called orthogonal {\sc Liouville} parametrization in $U_1(u)$ and $V_2(v)$.
(The other case is that of the orthogonal {\sc Liouville} parametrization in $V_1(v)$ and $U_2(u)$.) 
\end{definition}

The next line element is relevant for the {\sc Ivory}--property of the geodesic diagonals:
 
\begin{definition}[{\sc Stäckel} parametrization]
When the line element ${\dif s}$ of the surface ${\bf x}(u, v)$ has the following form: 
\begin{align}
{\dif s}^2 =
\begin{vmatrix}
U(u)   & V(v)   \\
U_1(u) & V_1(v)
\end{vmatrix}
\left( \frac{{\dif u}^2}{V_1(v)} - \frac{{\dif v}^2}{U_1(u)} \right)
\end{align}
with 
$g_{12}(u, v) = g_{21}(u, v) = 0$, the parametrization of the surface is called {\sc Stäckel} parametrization.
\end{definition}

\begin{remark}
The isothermal {\sc Liouville} parametrization is a {\sc Stäckel} parametrization because we can write it as follows:
\begin{align*}
{\dif s}^2 = 
\begin{vmatrix}
U(u) & V(v)   \\
-1   & 1
\end{vmatrix}
\left( {\dif u}^2 + {\dif v}^2 \right) =
\left( U(u) + V(v) \right) \left( {\dif u}^2 + {\dif v}^2 \right)
\end{align*}
\end{remark}

\begin{remark}
The isothermal {\sc Clairaut} parametrization in $u$ is a {\sc Stäckel} parametrization:
\begin{align*}
{\dif s}^2 = 
\begin{vmatrix}
U(u) & 0   \\
-1   & 1
\end{vmatrix}
\left( {\dif u}^2 + {\dif v}^2 \right) = U(u) \left( {\dif u}^2 + {\dif v}^2 \right)
\end{align*}
The isothermal {\sc Clairaut} parametrization in $v$ is a {\sc Stäckel} parametrization:
\begin{align*}
{\dif s}^2 = 
\begin{vmatrix}
 0   & V(v)   \\
-1   & 1
\end{vmatrix}
\left( {\dif u}^2 + {\dif v}^2 \right) = V(v) \left( {\dif u}^2 + {\dif v}^2 \right)
\end{align*}
\end{remark}

\begin{remark}
The orthogonal {\sc Clairaut} parametrization in $u$ is a {\sc Stäckel} parametrization:
\begin{align*}
{\dif s}^2 =
\begin{vmatrix}
U_1(u) & 0   \\
\frac{U_1(u)}{U_2(u)} & -1
\end{vmatrix}
\left( - {\dif u}^2 - \frac{U_2(u)}{U_1(u)} {\dif v}^2 \right) = U_1(u) {\dif u}^2 + U_2(u) {\dif v}^2 
\end{align*}
The orthogonal {\sc Clairaut} parametrization in $v$ is a {\sc Stäckel} parametrization:
\begin{align*}
{\dif s}^2 =
\begin{vmatrix}
0 & V_2(v)   \\
-1 &\frac{V_2(v)}{V_1(v)}
\end{vmatrix}
\left(\frac{V_1(v)}{V_2(v)}{\dif u}^2 + {\dif v}^2 \right) = V_1(v) {\dif u}^2 + V_2(v) {\dif v}^2 
\end{align*}
\end{remark}

\begin{remark}
The orthogonal {\sc Liouville} parametrization in $U_1(u)$ and $V_2(v)$ is a {\sc Stäckel} parametrization:
\begin{align*}
{\dif s}^2 =
\begin{vmatrix}
U_1(u) & 0   \\
U_1(u) & -V_2(v)
\end{vmatrix}
\left( -\frac{{\dif u}^2}{V_2(v)} - \frac{{\dif v}^2}{U_1(u)}  \right) = U_1(u) {\dif u}^2 + V_2(v) {\dif v}^2 
\end{align*}
The orthogonal {\sc Liouville} parametrization in $V_1(v)$ and $U_2(u)$ is in general not a {\sc Stäckel} parametrization. We can write:
\begin{align*}
{\dif s}^2 =
\begin{vmatrix}
U(u)   & V(v)   \\
-\frac{1}{U_2(u)} & \frac{1}{V_1(v)}
\end{vmatrix}
\left( V_1(v) {\dif u}^2 + U_2(u) {\dif v}^2 \right)
\end{align*}
and we see that this is only then an orthogonal {\sc Liouville} parametrization in $V_1(v)$ and $U_2(u)$ when the following determinant is a non-zero constant:
\begin{align*}
\begin{vmatrix}
U(u)   & V(v)   \\
-\frac{1}{U_2(u)} & \frac{1}{V_1(v)}
\end{vmatrix}
=\frac{U(u)}{V_1(v)} + \frac{V(v)}{U_2(u)}
=\text{const.}
\end{align*}

\end{remark}

\begin{remark}
The orthogonal {\sc Liouville} parametrization is in general not a {\sc Stäckel} parametrization. We only get  special cases when both $U_1(u)$ and $V_1(v)$ in the {\sc Stäckel} parametrization are non-zero constants.
\end{remark}

\subsection{Examples of {\sc Liouville} surfaces}

In this section we want to give examples of {\sc Liouville} surfaces. On all surfaces with a {\sc Liouville} line element (or a {\sc Clairaut} line element as special case) the diagonals of a parameter line rectangle have the same energy (cf. theorem \ref{main_thm_1}). 

\subsubsection{Surfaces of constant {\sc Gaussian} curvature} The first three examples are surfaces of constant {\sc Gaussian} curvature: plane, sphere, pseudosphere.

\begin{example}[Plane]
The plane admits four different (up to uniform scaling and {\sc Euclidean} motions - these are similarity transforms) isothermal {\sc Liouville} parametrizations (the first four examples below, see figure \ref{fig:plane_li_2}):
\begin{enumerate}
\item {\sc Cartesian} coordinates: 
\begin{align*}
{\bf x}(u, v) = \begin{pmatrix}u\\v\end{pmatrix}
\end{align*}
The line element is isothermal {\sc Clairaut} in $u$ (or $v$): ${\dif s}^2 = {\dif u}^2 + {\dif v}^2$.

\item Polar coordinates: 
\begin{align*}
{\bf x}(u, v) = \begin{pmatrix}e^u \cos(v)\\e^u \sin(v)\end{pmatrix}
\end{align*}
The line element is isothermal {\sc Clairaut} in $u$: ${\dif s}^2 = e^{2 u}({\dif u}^2 + {\dif v}^2)$.

\item Parabolic coordinates: 
\begin{align*}
{\bf x}(u, v) = \begin{pmatrix}u^2 - v^2\\2 u v\end{pmatrix}
\end{align*}
The line element is isothermal {\sc Liouville}: ${\dif s}^2 = 4 (u^2 + v^2) ({\dif u}^2 + {\dif v}^2)$.

\item Elliptic coordinates: 
\begin{align*}
{\bf x}(u, v) = \begin{pmatrix}\cos(u)\cosh(v)\\-\sin(u)\sinh(v)\end{pmatrix}
\end{align*}
They are isothermal {\sc Liouville}: ${\dif s}^2 = \frac{1}{2} (\cosh (2 v) - \cos (2 u)) ({\dif u}^2 + {\dif v}^2)$.

\item Standard polar coordinates: 
\begin{align*}
{\bf x}(u, v) = \begin{pmatrix}u \cos(v)\\u \sin(v)\end{pmatrix}
\end{align*}
They are orthogonal {\sc Clairaut} in $u$: ${\dif s}^2 = {\dif u}^2 + u^2 {\dif v}^2$. These are also called geodesic parallel coordinates (because $g_{11}=1$ and $g_{12}=0$).
\item Orthogonal {\sc Liouville} coordinates: 
\begin{align*}
{\bf x}(u, v) = \begin{pmatrix}v^5\\u^2\end{pmatrix}
\end{align*}
They are orthogonal {\sc Liouville} coordinates in $U_1(u)$ and $V_2(v)$: ${\dif s}^2 = 4 u^2 {\dif u}^2 + 25 v^8 {\dif v}^2$. See figure \ref{fig:plane_li_5}.
\end{enumerate}
\end{example}

\begin{example}[Unit sphere centered at origin]
The sphere admits several {\sc Liouville} parametrizations:
\begin{enumerate}
\item Standard parametrization as surface of rotation:
\begin{align*}
{\bf x}(u, v) = \begin{pmatrix}
\cos(u) \cos(v)\\ 
\cos(u) \sin(v)\\
\sin(u)\end{pmatrix}
\end{align*}
The line element is orthogonal {\sc Clairaut} in $u$: ${\dif s}^2 = {\dif u}^2 + \cos^2(u){\dif v}^2$.

\item {\sc Mercator} coordinates: 
\begin{align*}
{\bf x}(u, v) = \begin{pmatrix}
\frac{\cos(v)}{\cosh(u)}\\ 
\frac{\sin(v)}{\cosh(u)}\\ 
\tanh(u)
\end{pmatrix}
\end{align*}
The line element is isothermal {\sc Clairaut} in $u$: ${\dif s}^2 = \frac{1}{\cosh^2(u)}({\dif u}^2 + {\dif v}^2)$. See left image in figure \ref{fig:sphere_li}.

\item Elliptic coordinates (see article \cite{boy}, formula (3.51)) with {\sc Jacobi} elliptic functions (where $i=\sqrt{-1}$ and the modulus is $m=k^2=\frac{1}{2}$):
\begin{align*}
{\bf x}(u, v) = \begin{pmatrix}
\frac{1}{\sqrt{2}} \sn\left(i u\right) \sn\left(v\right)\\
                 i \cn\left(i u\right) \cn\left(v\right)\\
          \sqrt{2} \dn\left(i u\right) \dn\left(v\right)
\end{pmatrix}
\end{align*}
This is isothermal {\sc Liouville}: ${\dif s}^2 = \frac{1}{2} \left(\sn^2\left(i u\right) - \sn^2\left(  v\right)\right)({\dif u}^2 + {\dif v}^2)$. The parameter lines form two families of confocal geodesic ellipses. See right image in figure \ref{fig:sphere_li}.
\end{enumerate}
\begin{figure}[hbt]
\centering
$\begin{array}{cccc}
\includegraphics[scale=0.5]{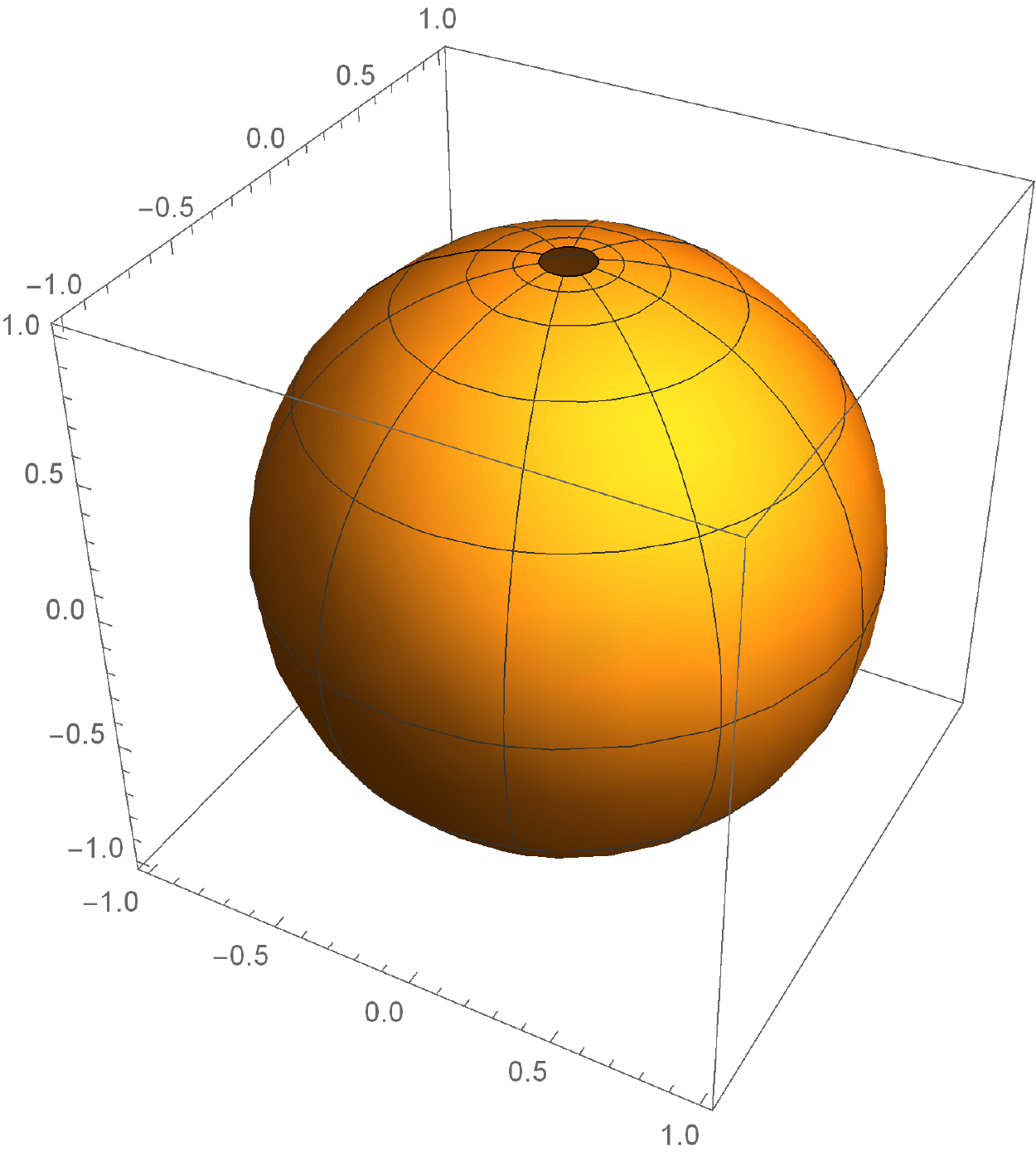} &
\includegraphics[scale=0.5]{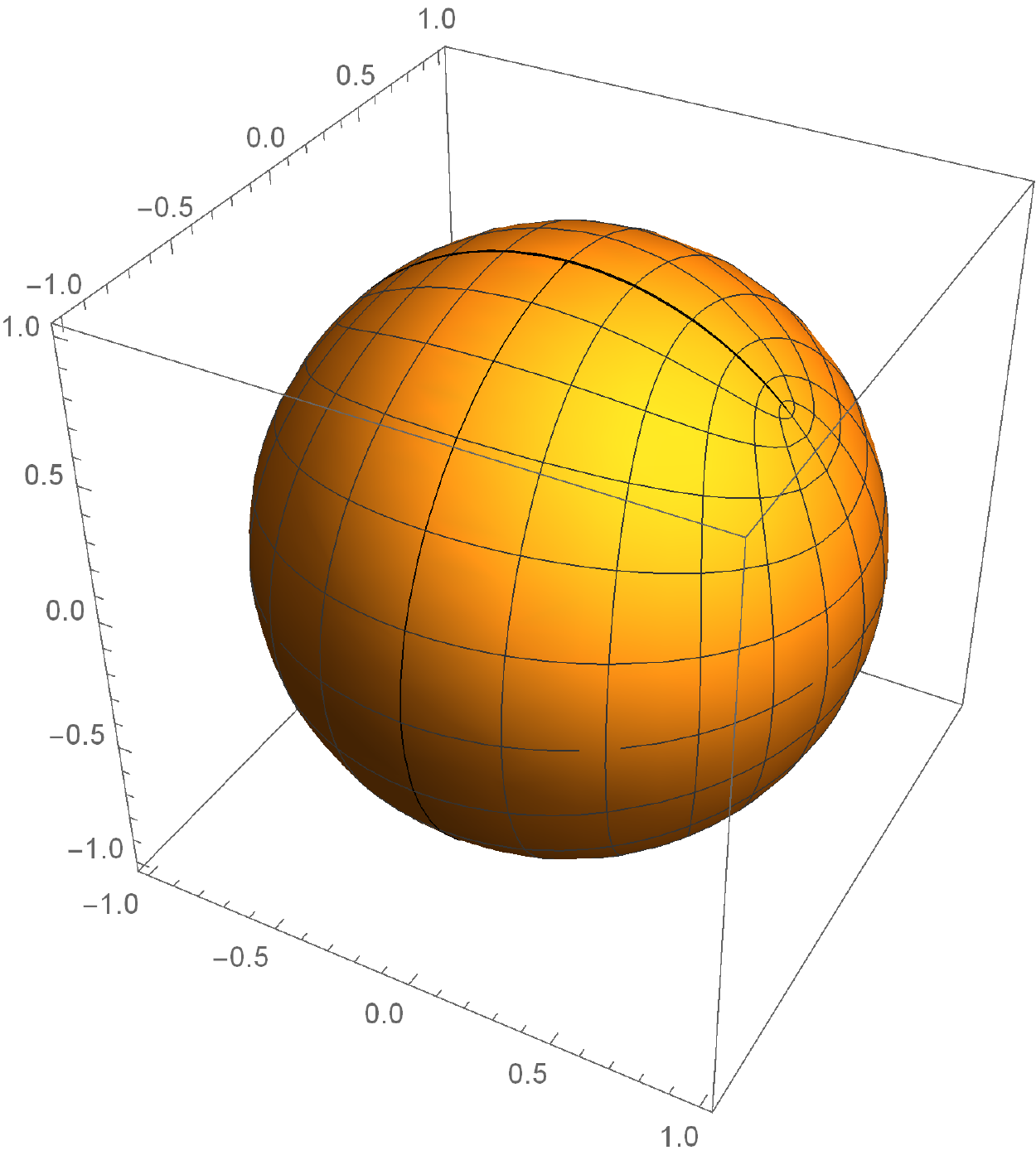}
\end{array}$
\caption{Isothermal {\sc Liouville} coordinates on the sphere}
\label{fig:sphere_li}
\end{figure}
\end{example}

\begin{example}[Pseudosphere]
The pseudosphere admits the following {\sc Liouville} parametrizations:
\begin{enumerate}
\item Standard parametrization as surface of rotation: 
\begin{align*}
{\bf x}(u, v) = \begin{pmatrix}
\frac{\cos(v)}{\cosh(u)}\\ 
\frac{\sin(v)}{\cosh(u)}\\ 
u-\frac{\sinh(u)}{\cosh(u)}
\end{pmatrix}
\end{align*}
The line element is orthogonal {\sc Clairaut} in $u$: ${\dif s}^2 = \frac{\sinh(u)}{\cosh(u)}{\dif u}^2 + \frac{1}{\cosh(u)}{\dif v}^2$. See left image in figure \ref{fig:pseudosphere}.

\item {\sc Liouville} coordinates: 
\begin{align*}
{\bf x}(u, v) = \begin{pmatrix}
\frac{\cos (v)}{u} \\
\frac{\sin (v)}{u} \\
\arccosh(u) - \frac{\sqrt{u^2 - 1}}{u}
\end{pmatrix}
\end{align*}
The line element is isothermal {\sc Clairaut} in $u$: ${\dif s}^2 = \frac{1}{u^2}({\dif u}^2 + {\dif v}^2)$. See right image in figure \ref{fig:pseudosphere}.
\end{enumerate}
\begin{figure}[hbt]
\centering
$\begin{array}{cccc}
\includegraphics[scale=0.5]{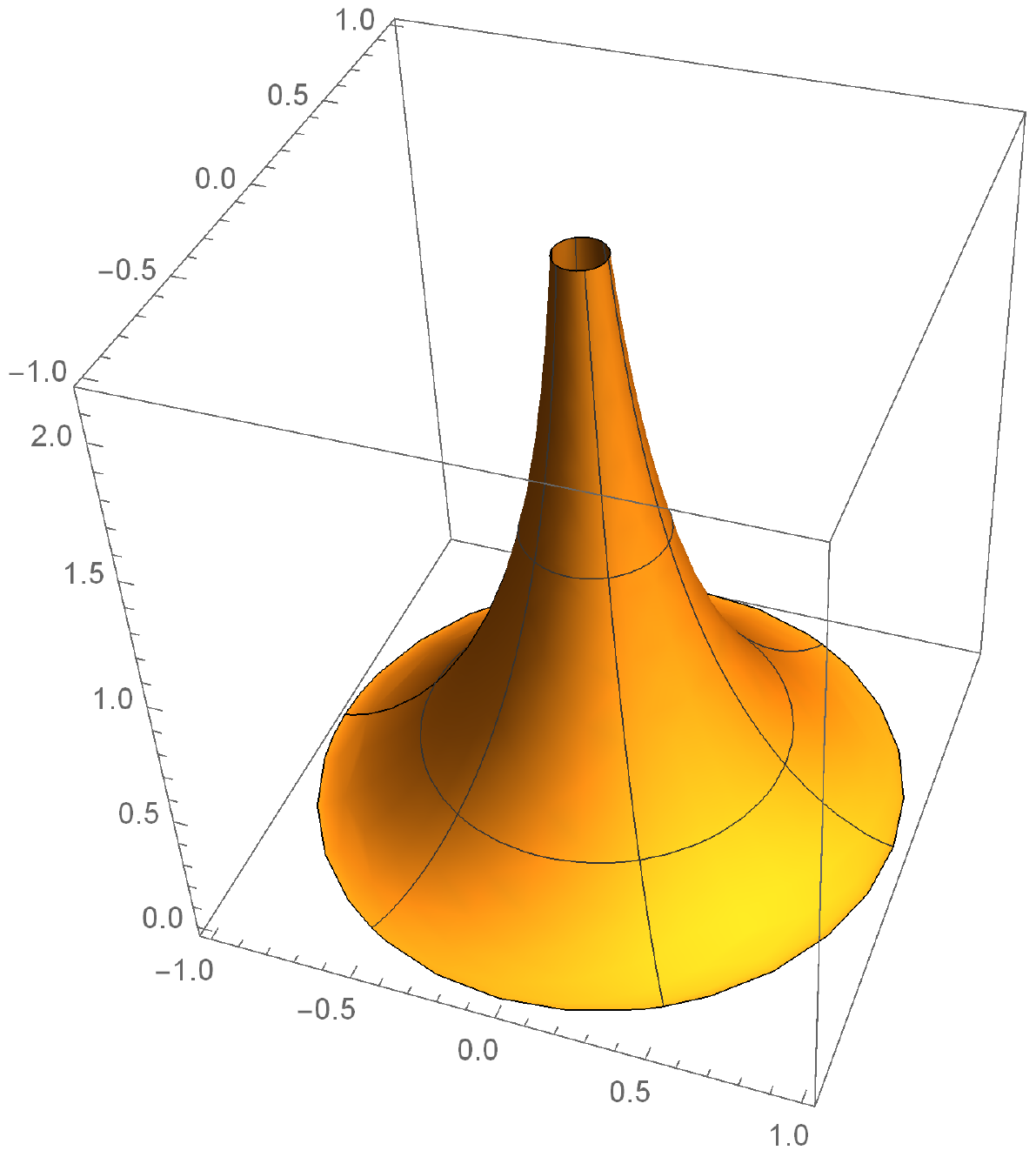} &
\includegraphics[scale=0.5]{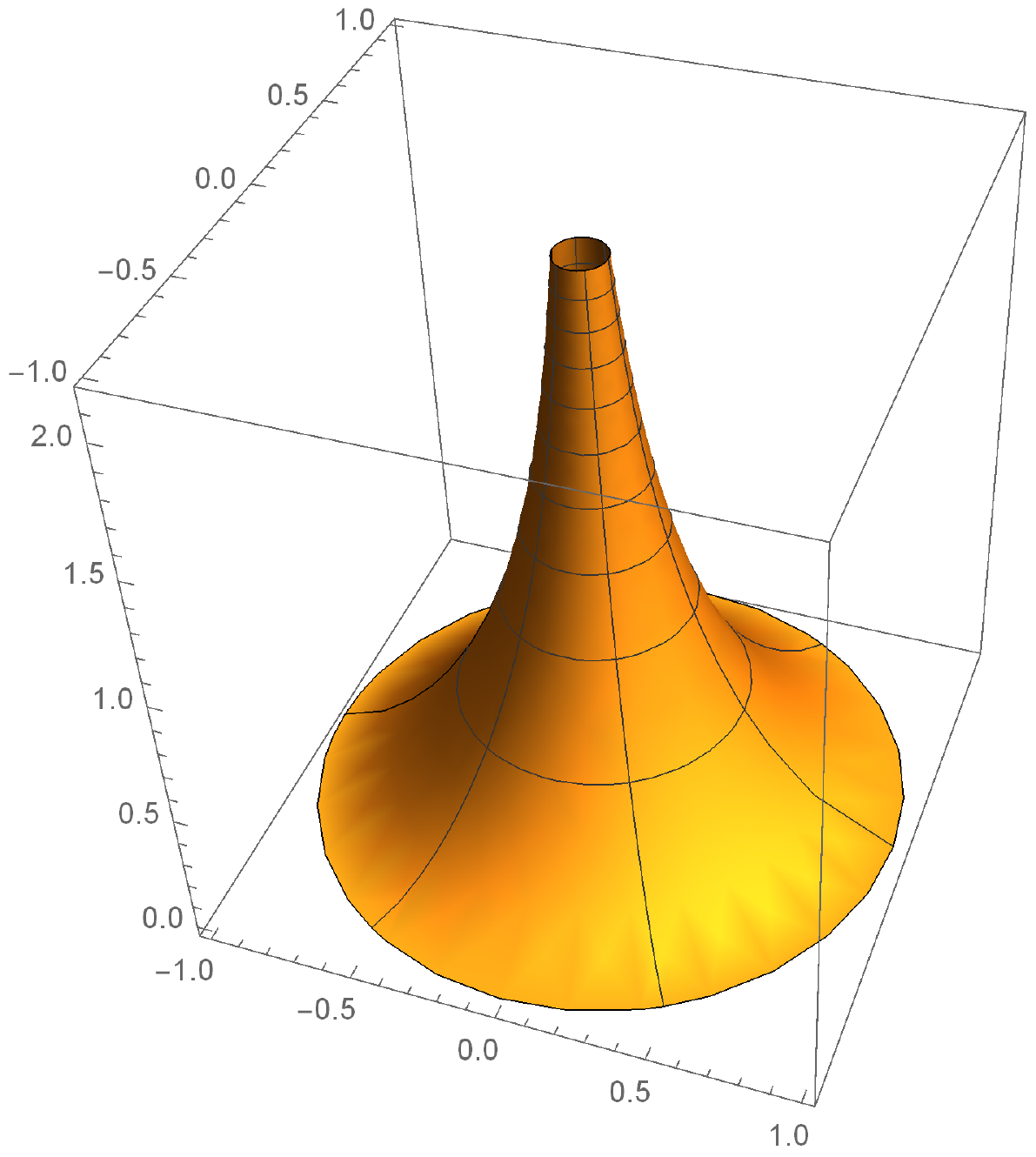}
\end{array}$
\caption{Pseudosphere; in the left image orthogonal {\sc Clairaut} parametrization and in the right image with isothermal {\sc Clairaut} parametrization}
\label{fig:pseudosphere}
\end{figure}
\end{example}

\subsubsection{Surfaces of rotation}
\begin{example}[Surface of rotation]
The surface of rotation admits the following {\sc Liouville} parametrizations:
\begin{enumerate}
\item Standard parametrization as surface of rotation:
\begin{align*}
{\bf x}(u, v) = \begin{pmatrix}
r(u)\cos(v)\\ 
r(u)\sin(v)\\ 
h(u)
\end{pmatrix}
\end{align*}
This is orthogonal {\sc Clairaut} in $u$: ${\dif s}^2 = ((h'(u))^2+(r'(u))^2){\dif u}^2 + (r(u))^2 {\dif v}^2$.

\item {\sc Liouville} coordinates: Let's start with the previous parametrization and express $u$ as a function of $t$. We want to achieve an isothermal {\sc Clairaut} parametrization in $t$, that means:
$((h'(u))^2+(r'(u))^2)\frac{{\dif u}^2}{{\dif t}^2} = (r(u))^2$. Then we get:
\begin{align*}
\int {\dif t} &= \int \frac{\sqrt{((h'(u))^2+(r'(u))^2)}}{(r(u))^2}{\dif u} \\
t &= F(u) \\
F^{-1}(t) &= u
\end{align*}
The new parametrization is then:
\begin{align*}
{\bf x}(F^{-1}(t), v) = \begin{pmatrix}
r(F^{-1}(t))\cos(v)\\ 
r(F^{-1}(t))\sin(v)\\ 
h(F^{-1}(t))
\end{pmatrix}
\end{align*}
This is isothermal {\sc Clairaut} in $t$: ${\dif s}^2 = (r(F^{-1}(t)))^2({\dif t}^2 + {\dif v}^2)$.
\end{enumerate}
\end{example}

\subsubsection{Surfaces of translation (Schiebflächen)}
\begin{example}[Parabolic cylinder]
The following parametrization: 
\begin{align*}
{\bf x}(u, v) = \begin{pmatrix}
u\\ 
u^2+v^2\\ 
u^2-v^2
\end{pmatrix}
\end{align*}
is a surface of translation with implicit equation: $2 x^2 = y + z$ and has as line element: ${\dif s}^2 = (1 + 8 u^2){\dif u}^2 + 8 v^2 {\dif v}^2$.
This is orthogonal {\sc Liouville} in $U_1(u)$ and $V_2(v)$.
\end{example}

\begin{example}[Parabolic cylinder]
The following parametrization: 
\begin{align*}
{\bf x}(u, v) = \begin{pmatrix}
u\\ 
v\\ 
u^2
\end{pmatrix}
\end{align*}
is a surface of translation with implicit equation: $z = x^2$ and has as line element: ${\dif s}^2 = (1 + 4 u^2){\dif u}^2 + {\dif v}^2$.
This is orthogonal {\sc Clairaut} in $u$.
\end{example}

\begin{example}[Plane]
The following parametrization: 
\begin{align*}
{\bf x}(u, v) = \begin{pmatrix}
u\\ 
u+v\\ 
u-v
\end{pmatrix}
\end{align*}
is a plane as surface of translation with implicit equation: $2x = y + z$ and has as line element: ${\dif s}^2 = 3{\dif u}^2 + 2{\dif v}^2$.
This is orthogonal {\sc Clairaut} in $u$ (or $v$).
\end{example}

\subsubsection{Minimal {\sc Liouville} surfaces (from \cite{bbw}}
\begin{example}[{\sc Enneper} surface]
The following polynomial parametrization of the {\sc Enneper} minimal surface: 
\begin{align*}
{\bf x}(u, v) = \begin{pmatrix}
v \left(-3 u^2  +v^2+3\right)\\
u \left(   u^2-3 v^2+3\right)\\
6 u v
\end{pmatrix}
\end{align*}
has the line element: ${\dif s}^2 = 9 ( 1 + u^2 + v^2 )^2 ({\dif u}^2 + {\dif v}^2)$.
This is only isothermal but not a {\sc Liouville} line element. This parametrization is therefore a counterexample to the {\sc Liouville} parametrizations and the diagonals do not have the same energy in this parametrization. See left image in figure \ref{fig:minimal_enneper}.
\begin{figure}[hbt]
\centering
$\begin{array}{cccc}
\includegraphics[scale=0.5]{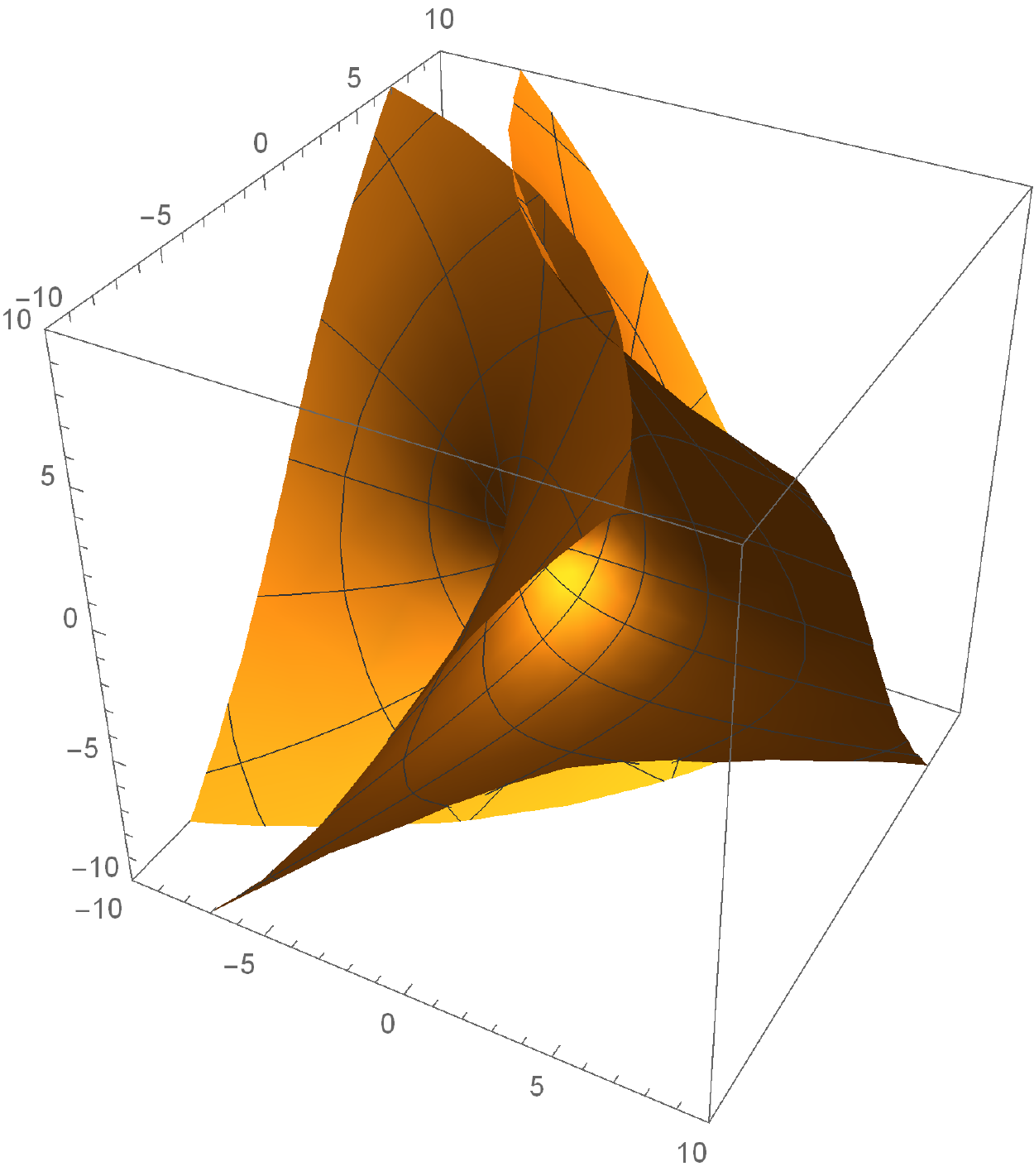} &
\includegraphics[scale=0.5]{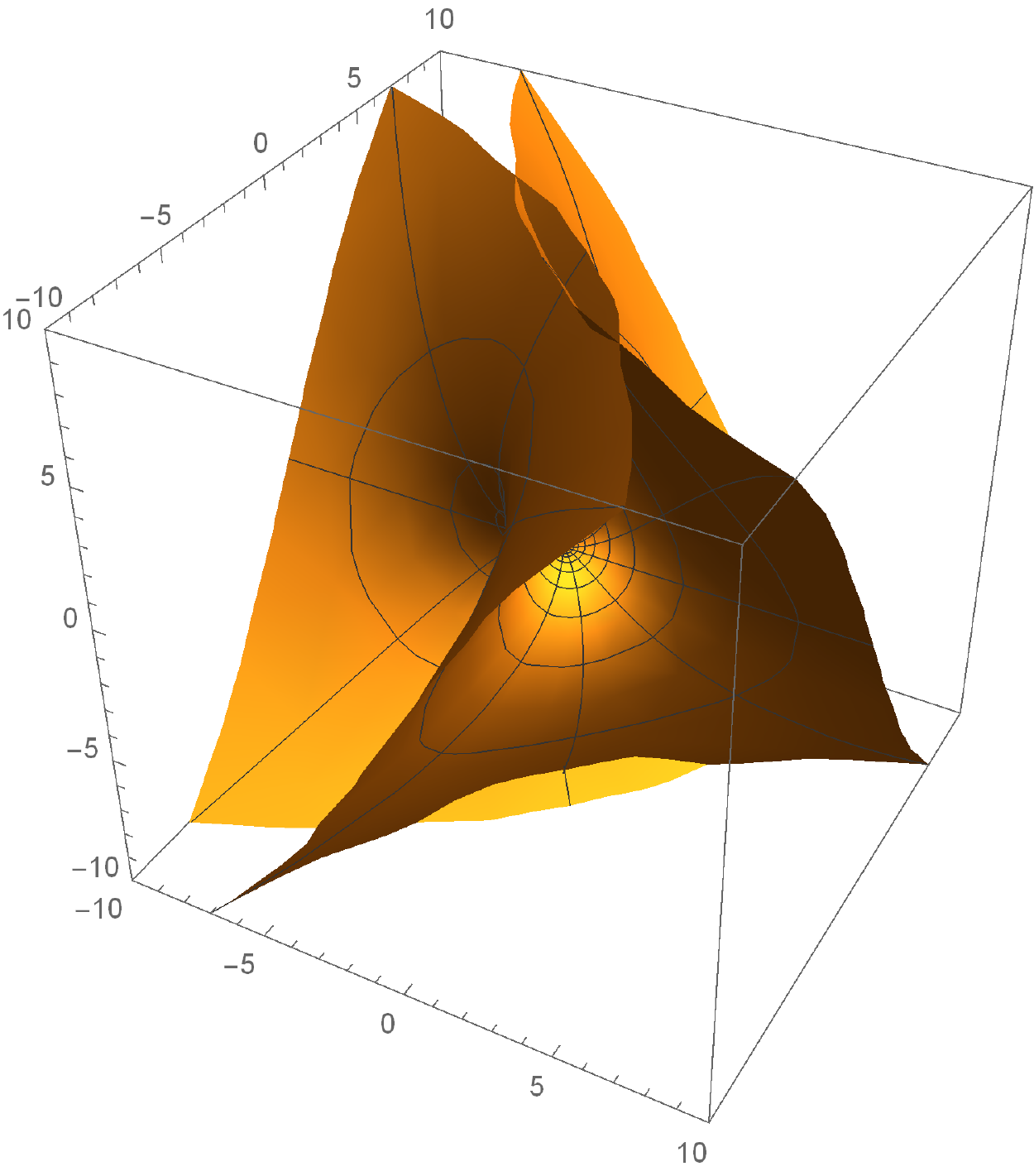}
\end{array}$
\caption{{\sc Enneper} surface; left image with isothermal polynomial parametrization and right image with isothermal {\sc Liouville} parametrization}
\label{fig:minimal_enneper}
\end{figure}
\end{example}

\begin{example}[{\sc Enneper} surface]
The following parametrization of the {\sc Enneper} minimal surface: 
\begin{align*}
{\bf x}(u, v) = \begin{pmatrix}
 -e^u \sin (v) \left(2 e^{2 u} \cos (2 v)+e^{2 u}-3\right)\\
  e^u \cos (v) \left(2 e^{2 u} \cos (2 v)-e^{2 u}+3\right)\\
3 e^{2 u} \sin (2 v)
\end{pmatrix}
\end{align*}
has the line element: ${\dif s}^2 = 9 e^{2 u} (1 + e^{2 u})^2 ({\dif u}^2 + {\dif v}^2)$.
This is an isothermal {\sc Clairaut} in $u$ line element. See right image in figure \ref{fig:minimal_enneper}.
\end{example}

\begin{example}[{Helicoid -- catenoid and their associated surfaces}]
The following parametrization of the family of minimal surfaces with parameter $t \in [0,1]$: 
\begin{align*}
{\bf x}(u, v) = \begin{pmatrix}
-e^{-u} \sin \left(\frac{\pi  t}{2}-v\right)-4 e^u \sin \left(\frac{\pi  t}{2}+v\right)\\
e^{-u} \cos \left(\frac{\pi  t}{2}-v\right)-4 e^u \cos \left(\frac{\pi  t}{2}+v\right)\\
-4 \left(u \sin \left(\frac{\pi  t}{2}\right)+v \cos \left(\frac{\pi  t}{2}\right)\right)
\end{pmatrix}
\end{align*}
has the line element: ${\dif s}^2 = \left(e^{-2 u}+16 e^{2 u}+8\right) ({\dif u}^2 + {\dif v}^2)$.
This is an isothermal {\sc Clairaut} in $u$ line element. For $t=0$ we obtain the helicoid and for $t=1$ the catenoid, see figure \ref{fig:minimal_catenoid}. Their metric stays the same and does not depend on the parameter $t$. 
\begin{figure}[hbt]
\centering
$\begin{array}{cccc}
\includegraphics[scale=0.5]{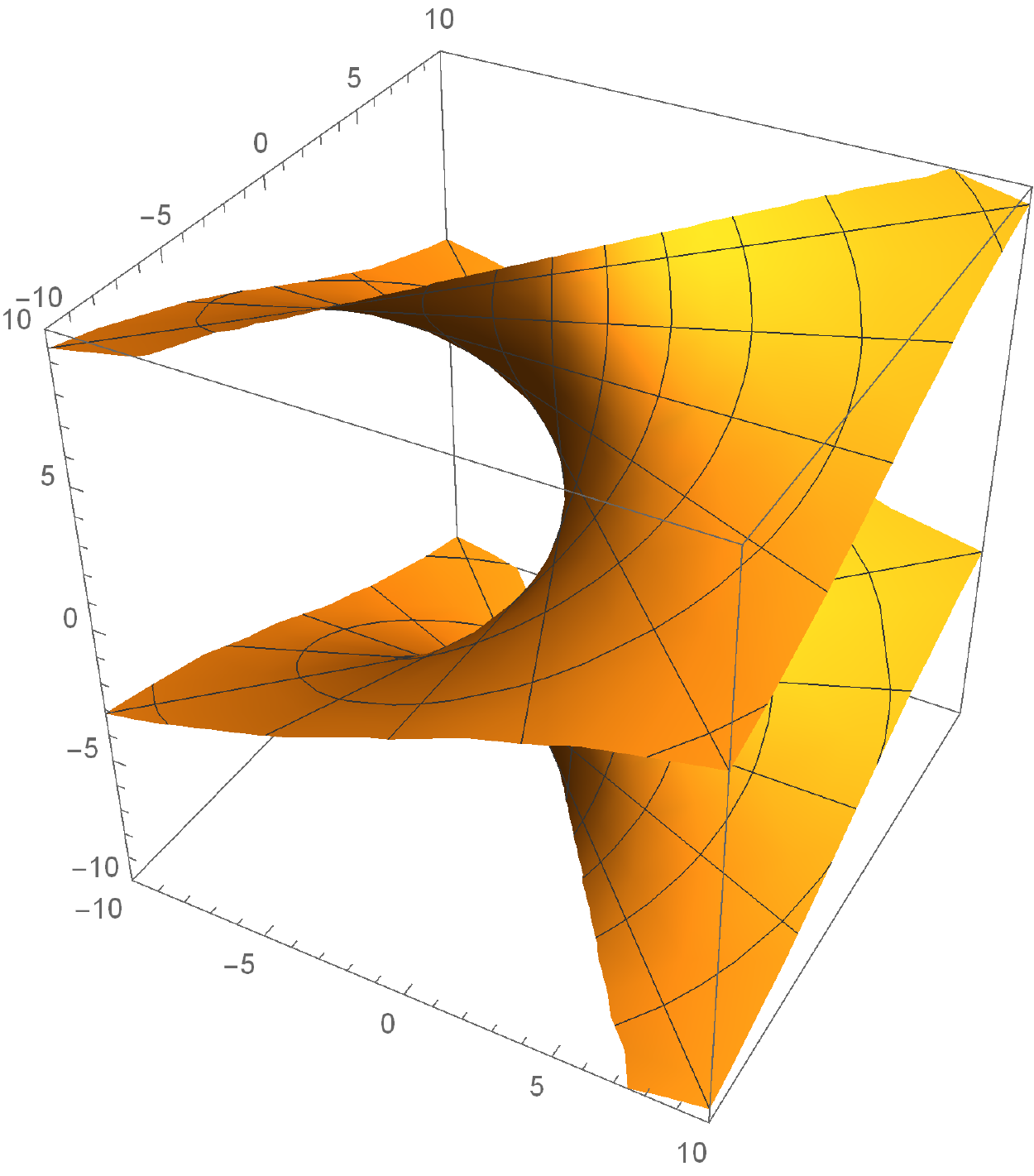} &
\includegraphics[scale=0.5]{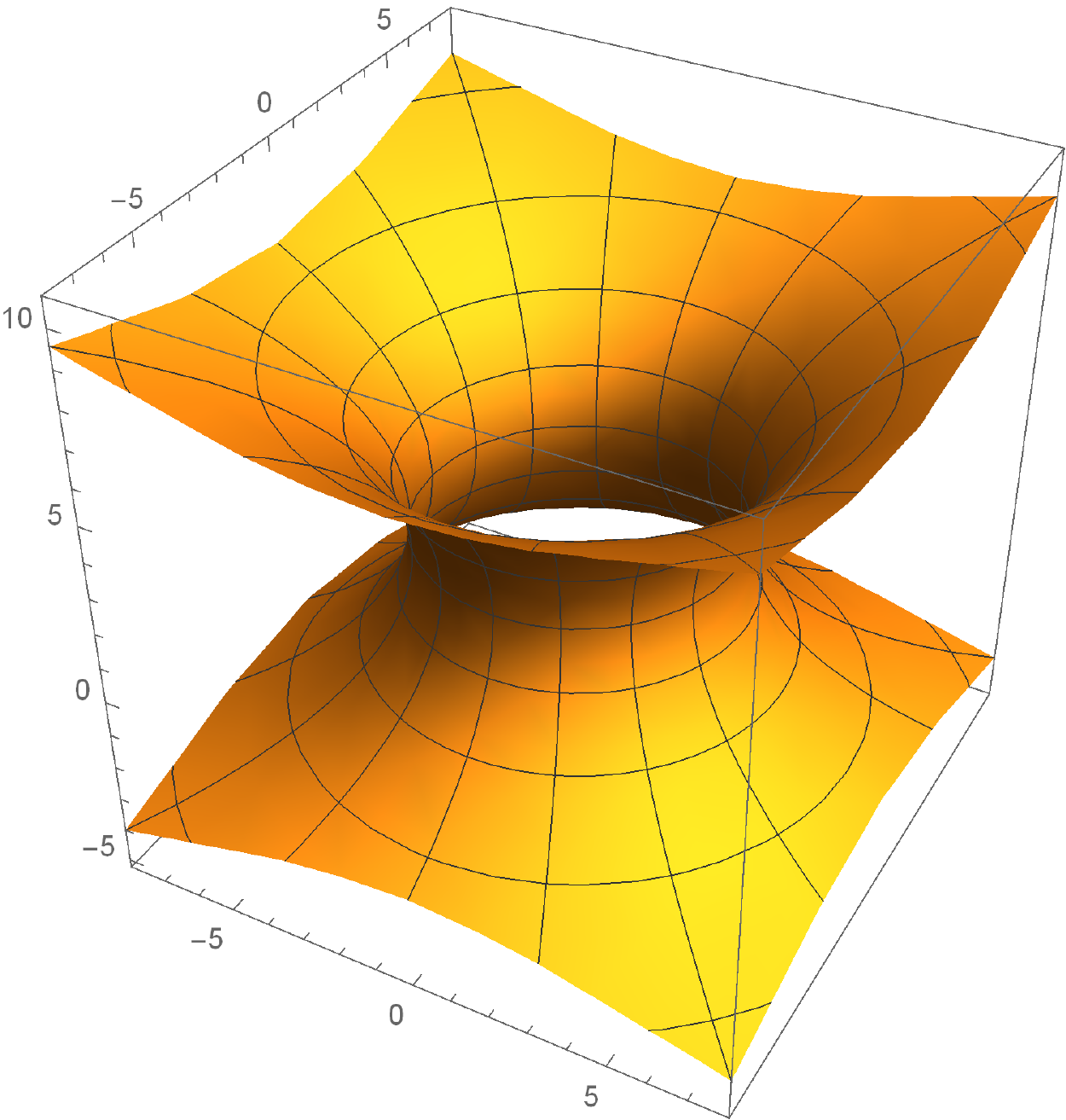}
\end{array}$
\caption{Helicoid (left image) and catenoid (right image) with isothermal {\sc Liouville} parametrization}
\label{fig:minimal_catenoid}
\end{figure}
\end{example}

\subsubsection{Quadrics}
\begin{example}[Quadrics]
Examples for quadric parametrizations are (see next section for a triaxial ellipsoid, figure \ref{fig:result}):
\begin{enumerate}
\item Standard curvature line parametrization of the orthogonal {\sc Stäckel} type, see formula (\ref{ellipsoid_scl}). The diagonals do not have the same energy in this parametrization, but the geodesic diagonals have the same length ({\sc Ivory}).
\item Isothermal {\sc Liouville} curvature line parametrization, see formula (\ref{ellipsoid_lcl}). Here the diagonals have the same energy and the geodesic diagonals have the same length ({\sc Ivory}).
\end{enumerate}
\end{example}

\section{Triaxial ellipsoid as example for quadrics}

\subsection{Standard curvature line parametrization of the triaxial ellipsoid}

In the literature (see \cite{bmu}, \cite{nyr}, \cite{val}), the authors describe how to map a triaxial ellipsoid conformally to a plane. The best paper (to my knowledge) on this matter is \cite{nyr} because it actually computes (making use of elliptic integrals) the integrals already given by {\sc Jacobi} in his ``Lectures on Dynamics''. In this article we want to go in the opposite direction and map a plane rectangle conformally to a triaxial ellipsoid in such a way that the map has an isothermal {\sc Liouville} line element. The result can be seen in the right image of figure \ref{fig:result}.
\begin{figure}[H]
\centering
$\begin{array}{cccc}
\includegraphics[scale=0.5]{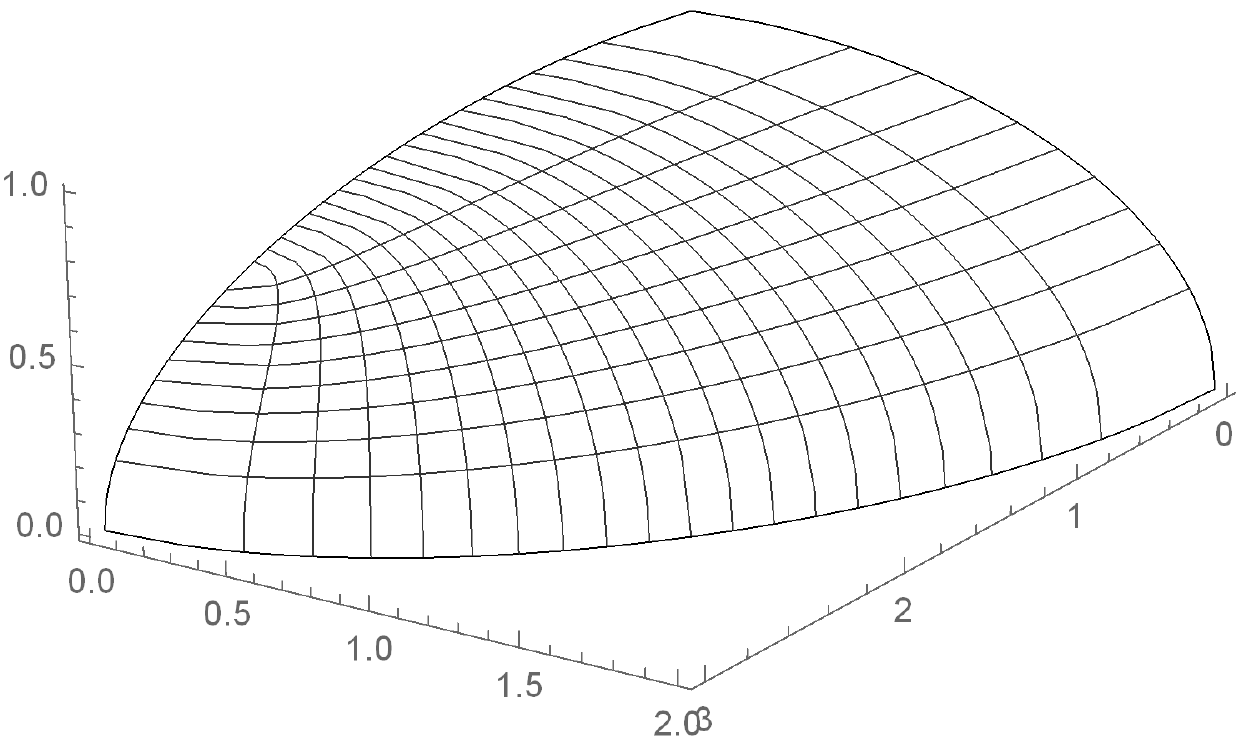} &
\includegraphics[scale=0.5]{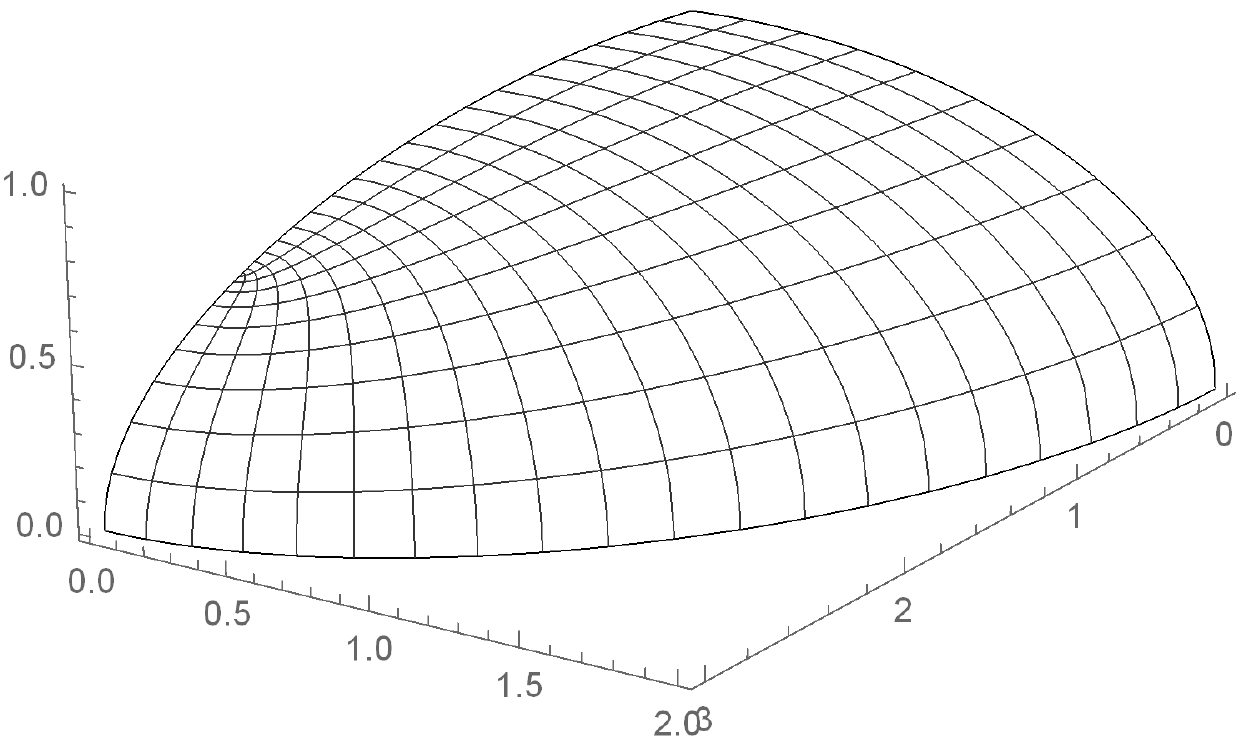}
\end{array}$
\caption{Standard curvature line (left image) and isothermal {\sc Liouville} (right image) parametrization of a triaxial ellipsoid}
\label{fig:result}
\end{figure}

We will start here with the standard curvature line parametrization of the triaxial ellipsoid with semi-axes $0 < c < b < a$:
\begin{align} \label{ellipsoid_scl} 
{\bf x}(u,v)=\left( 
\sqrt{\frac{a^2 (a^2 - u) (a^2 - v)}{(a^2 - b^2) (a^2 - c^2)}}, 
\sqrt{\frac{b^2 (b^2 - u) (b^2 - v)}{(b^2 - c^2) (b^2 - a^2)}}, 
\sqrt{\frac{c^2 (c^2 - u) (c^2 - v)}{(c^2 - a^2) (c^2 - b^2)}}
\right)^t 
\end{align}
where $0 < c^2 < v < b^2 < u < a^2$ (see left image of figure \ref{fig:result}).

The coefficients of the first fundamental form are computed as follows:
\begin{align*} 
g_{11}(u, v) &= {\bf x}_u (u, v) \cdot {\bf x}_u (u, v) = (u-v) f(u)\\
g_{12}(u, v) &= {\bf x}_u (u, v) \cdot {\bf x}_v (u, v) = g_{21}(u, v) = 0\\
g_{22}(u, v) &= {\bf x}_v (u, v) \cdot {\bf x}_v (u, v) = (u-v) (-f(v))
\end{align*}
with the function $f$ defined as:
\begin{align*} 
f(t)=\frac{1}{4} \frac{t}{(a^2 - t) (b^2 - t) (c^2 - t)}
\end{align*}

The line element of the ellipsoid is:
\begin{align} 
{\dif s}^2 = g_{11}(u, v) {\dif u}^2 + g_{22}(u, v) {\dif v}^2 = (u-v) (f(u){\dif u}^2-f(v){\dif v}^2) \label{old_ds}
\end{align}

\begin{remark}
This line element is an orthogonal {\sc Stäckel} line element, because it can be written as:
\begin{align*} 
{\dif s}^2 = (u-v) (f(u){\dif u}^2-f(v){\dif v}^2) =
\begin{vmatrix}
u f(u) & v f(v)   \\
  f(u) &   f(v)
\end{vmatrix}
\left( \frac{{\dif u}^2}{f(v)} - \frac{{\dif v}^2}{f(u)} \right)
\end{align*}
\end{remark}

\subsection{Conformal map from ellipsoid to plane}

What we want to achieve is the following isothermal {\sc Liouville} form of this line element (\ref{old_ds}):
\begin{align} 
{\dif s}^2 &= (U(x)-V(y)) ({\dif x}^2+{\dif y}^2) \label{new_ds}
\end{align}

If formulas (\ref{old_ds}) and (\ref{new_ds}) are to be the same we must have:
\begin{align*} 
       {\dif x} = \sqrt{+f(u)}{\dif u} \quad \text{and} \quad
       {\dif y} = \sqrt{-f(v)}{\dif v}
\end{align*}

By integrating we get formulas corresponding to (7) and (8) from \cite{nyr}:
\begin{align*} 
       X(u) &= \int_{b^2}^{u} \sqrt{+f(t)}{\dif t} = F_1(u)-F_1(b^2) = F_1(u) \\
       Y(v) &= \int_{c^2}^{v} \sqrt{-f(t)}{\dif t} = F_2(v)-F_2(c^2) = F_2(v)
\end{align*}
with
\begin{align*}
F_1\left(t\right)&=\frac{b^2 i}{c \sqrt{ a^2 - b^2 }} \Pi\left(n_1; \varphi_1(t) | m_1 \right)  \\ 
F_2\left(t\right)&=\frac{c^2  }{b \sqrt{ a^2 - c^2 }} \Pi\left(n_2; \varphi_2(t) | m_2 \right) 
\end{align*}
where $i=\sqrt{-1}$ and
\begin{align*}
n_1 &= 1 - \frac{b^2}{c^2} &
\varphi_1(t) &= \arcsin \left({-i c \sqrt{\frac{ t - b^2 }{\left( b^2 - c^2 \right) t  }}}\right) 
          & m_1 &= \frac{{a}^2\left( c^2 - b^2 \right) }{{c}^2\left( a^2 - b^2 \right) }\\
n_2 &= 1 - \frac{c^2}{b^2} &
\varphi_2(t) &= \arcsin \left({b \sqrt{\frac{ t - c^2 }{\left( b^2 - c^2 \right) t  }}}\right) 
          & m_2 &= \frac{{a}^2\left( b^2 - c^2 \right) }{{b}^2\left( a^2 - c^2 \right) }
\end{align*}
and the incomplete elliptic integral of the third kind is defined as follows:
\begin{align*}
\Pi(n; \varphi | m)=\int_0^{\varphi} \frac{\dif \theta}{(1 - n\sin^2 \theta)\sqrt{1 - m \sin^2 \theta}}
\end{align*}

\subsection{Differential equations}

If we plug $u=U(x)$ and $v=V(y)$ in the equation (\ref{old_ds}) of the line element of the ellipsoid we get:
\begin{align*}
{\dif s}^2 = \left(U(x)-V(y)\right) 
                                \left(f(U(x))\left(\frac{\dif U(x)}{\dif x}\right)^2 {\dif x}^2
                                -f(V(y))\left(\frac{\dif V(y)}{\dif y}\right)^2 {\dif y}^2 \right)
\end{align*}
Comparing this formula with (\ref{new_ds}) we see that the functions $U(x)$ and $V(y)$ satisfy the following differential equations:
\begin{align*}
\frac{\dif U(x)}{\dif x} = \sqrt{\frac{+1}{f(U(x))}} \quad \text{and} \quad
\frac{\dif V(y)}{\dif y} = \sqrt{\frac{-1}{f(V(y))}}
\end{align*}

\subsection{Isothermal {\sc Liouville} map from plane to ellipsoid}

We are interested in the inverse functions $U(x)$ and $V(y)$ of $X(u)$ and $Y(v)$. We proceed as follows:

We define a generalized {\sc Jacobi} amplitude $\am(n;z|m)$ as inverse function of the elliptic integral of the third kind. That means 
\begin{align*}
z &= \Pi(n; \varphi | m) \\
\am(n; z | m) &= \varphi
\end{align*}
The {\sc Jacobi} amplitude as special case can be expressed in terms of this generalized {\sc Jacobi} amplitude as $\am(z|m)=\am(0;z|m)$. With the generalized {\sc Jacobi} amplitude we can invert the elliptic integrals of the third kind and get:
\begin{align*}
U(x)&=\frac{b^2}{1-n_1 \sin ^2\left(\am \left(n_1;\frac{x c \sqrt{a^2-b^2}}{i b^2}|m_1\right)\right)} \\
V(y)&=\frac{c^2}{1-n_2 \sin ^2\left(\am \left(n_2;\frac{y b \sqrt{a^2-c^2}}{c^2}  |m_2\right)\right)}
\end{align*}
We can introduce the generalized {\sc Jacobi} elliptic function $\sn(n;z|m) = \sin(\am(n;z|m))$ and an associated function $\en(n;z|m)$ (see next section) to get:
\begin{align*}
U(x)&=\frac{b^2}{1-n_1 \sn ^2\left(n_1;\frac{x c \sqrt{a^2-b^2}}{i b^2}|m_1\right)} 
     =\frac{b^2}{\en ^2\left(n_1;\frac{x c \sqrt{a^2-b^2}}{i b^2}|m_1\right)}\\
V(y)&=\frac{c^2}{1-n_2 \sn ^2\left(n_2;\frac{y b \sqrt{a^2-c^2}}{c^2}  |m_2\right)}
     =\frac{c^2}{\en ^2\left(n_2;\frac{y b \sqrt{a^2-c^2}}{c^2}  |m_2\right)}
\end{align*}

Then the isothermal {\sc Liouville} parametrization of the ellipsoid is given by:
\begin{align} \label{ellipsoid_lcl}
{\bf x}(U(x),V(y))
\end{align}
where $0 = X(b^2) < x < X(a^2)$ and $0 = Y(c^2) < y < Y(b^2)$.

\subsection{Construction of the generalized $\sn(n;z|m)$ function}
Here I construct a series representation of the generalized {\sc Jacobi} $\sn$ function by using
the {\sc Lie} series method to invert an incomplete elliptic integral of the third kind.

\subsubsection{Elliptic integral in {\sc Jacobi} form}

The {\sc Jacobi} form of the elliptic integral of the third kind is given by:
\begin{align*}
\Pi(n ; x | m) &= \int_{0}^{x} \frac{d t}{(1 - n t^2)\sqrt{(1 - t^2)(1 - m t^2)}}
\end{align*}
Here $m = k^2$ is called the modulus and $n$ is called the characteristic.

\subsubsection{Construction of the generalized {\sc Jacobi} $\sn$ function}

Here we use the method outlined in \cite{Gro} with:
\begin{align}
f(x, y) &= y^2 - (1 - n x^2)^2 (1 - x^2) (1 - m x^2) \label{f_x_y_sn} \\
\varphi(x, y) &= \frac{1}{y} \nonumber
\end{align}
and the differential operator:
\begin{align*}
D = \frac{1}{\varphi} \frac{\partial}{\partial x} - \frac{1}{\varphi} \frac{f_x}{f_y} \frac{\partial}{\partial y} = y \frac{\partial}{\partial x} - \frac{f_x}{2} \frac{\partial}{\partial y}
\end{align*}
to construct the {\sc Lie} series:
\begin{align*}
\sn(n; u | m) = \left. e^{u D} x \right|_{(x,y)=(0,1)} \\
\sn'(n; u | m) = \left. e^{u D} y \right|_{(x,y)=(0,1)} \\
\end{align*}
where:
\begin{align*}
e^{u D} x = \sum_{j=0}^{\infty} \frac{u^j \overbrace{D(D( \cdots D}^j(x)))}{j !}
\end{align*}

\subsubsection{Inversion and evaluation of the elliptic integral of the third kind in {\sc Jacobi} form}

Let's consider:
\begin{align*}
\Pi(n ; x | m) = \int_{0}^{x} \frac{d t}{(1 - n t^2)\sqrt{(1 - t^2)(1 - m t^2)}}
\end{align*}
Now use the generalized {\sc Jacobi} $x = \sn(n; u | m)$ function:
\begin{align*}
\Pi(n ; \sn(n; u | m) | m) = \int_{0}^{\sn(n; u | m)} \frac{d t}{(1 - n t^2)\sqrt{(1 - t^2)(1 - m t^2)}}
\end{align*}
Substitute $t = \sn(n; s | m)$. We set:
\begin{align*}
\sqrt{1 - t^2}     &= \sqrt{1 - \sn^2 (n; s | m)} =: \cn(n; s | m) \\
\sqrt{1 - m t^2} &= \sqrt{1 - m \sn^2 (n; s | m)} := \dn(n; s | m) \\
\sqrt{1 - n t^2}   &= \sqrt{1 - n \sn^2 (n; s | m)} =: \en(n; s | m) 
\end{align*}
We replace in formula (\ref{f_x_y_sn}) $x = \sn (n; s | m)$ and $y = \sn' (n; s | m)$ and set $f(x, y) = 0$ to get the following identity:
\begin{align*}
\frac{dt}{ds} &= \sn' (n; s | m) = \en^2 (n; s | m) \cn(n; s | m) \dn(n; s | m) 
\end{align*}
With this we have verified the inversion:
\begin{align*}
\Pi(n ; \sn(n; u | m) | m) = \int_{0}^{u} \frac{\en^2 (n; s | m) \cn(n; s | m) \dn(n; s | m)}{\en^2 (n; s | m) \cn(n; s | m) \dn(n; s | m)} ds = \int_{0}^{u} ds = u
\end{align*}

\subsection{Acknowledgements}

I want to thank Prof. Maxim Nyrtsov for sending me his paper \cite{nyr} about the {\sc Jacobi} conformal map from ellipsoid to plane. I want to thank Albert D. Rich for his invaluable help in computing the two integrals $F_1(t)$ and $F_2(t)$. He will add these integrals to his rule based integrator, see \cite{adr}.
For numerical methods of inversion, see \cite{fuk}.

\section{Geodesics on {\sc Liouville} surfaces}

Here we study the geodesics on {\sc Liouville} surfaces.

\subsection{Geodesic equations for orthogonal surface patches}

\subsubsection{Preparations}

Let's differentiate the coefficients of the first fundamental form with respect to $u$ and $v$ and use the orthogonality of the surface patch (that means $g_{12} = {\bf x}_u \cdot {\bf x}_v = 0$):
\begin{align} \label{fff_deriv}
    \partial_u g_{11} &= \partial_u ({\bf x}_u \cdot {\bf x}_u) = 2 {\bf x}_u \cdot {\bf x}_{uu} & &
{\bf x}_u \cdot {\bf x}_{uu} = \frac{\partial_u g_{11}}{2} \\
    \partial_v g_{11} &= \partial_v ({\bf x}_u \cdot {\bf x}_u) = 2 {\bf x}_u \cdot {\bf x}_{uv} & &
{\bf x}_u \cdot {\bf x}_{uv} = \frac{\partial_v g_{11}}{2} \nonumber \\
    \partial_u g_{22} &= \partial_u ({\bf x}_v \cdot {\bf x}_v) = 2 {\bf x}_v \cdot {\bf x}_{uv} & &
{\bf x}_v \cdot {\bf x}_{uv} = \frac{\partial_u g_{22}}{2} \nonumber \\
    \partial_v g_{22} &= \partial_v ({\bf x}_v \cdot {\bf x}_v) = 2 {\bf x}_v \cdot {\bf x}_{vv} & &
{\bf x}_v \cdot {\bf x}_{vv} = \frac{\partial_v g_{22}}{2} \nonumber \\
0 = \partial_u g_{12} &= \partial_u ({\bf x}_u \cdot {\bf x}_v) = {\bf x}_v \cdot {\bf x}_{uu} + {\bf x}_u \cdot {\bf x}_{uv} & &
{\bf x}_v \cdot {\bf x}_{uu} = - \frac{\partial_v g_{11}}{2} \nonumber \\
0 = \partial_v g_{12} &= \partial_v ({\bf x}_u \cdot {\bf x}_v) = {\bf x}_v \cdot {\bf x}_{uv} + {\bf x}_u \cdot {\bf x}_{vv} & &
{\bf x}_u \cdot {\bf x}_{vv} = - \frac{\partial_u g_{22}}{2} \nonumber 
\end{align}

\subsubsection{Geodesic equations}

Now we consider (see \cite{Lew}) a curve $\alpha(t) = {\bf x}(u(t), v(t))$ on a surface and differentiate it twice with respect to $t$. The first derivative is $\dot{\alpha}(t) = {\bf x}_u \dot{u} + {\bf x}_v \dot{v}$ and the second one is:
\begin{align*}
\ddot{\alpha}(t) &= {\bf x}_u \ddot{u} + \dot{u}({\bf x}_{uu} \dot{u} + {\bf x}_{uv} \dot{v})
                  + {\bf x}_v \ddot{v} + \dot{v}({\bf x}_{uv} \dot{u} + {\bf x}_{vv} \dot{v}) \\
                 &= {\bf x}_u \ddot{u} + {\bf x}_v \ddot{v}
                  + {\bf x}_{uu} \dot{u}^2 + 2 {\bf x}_{uv} \dot{u} \dot{v} + {\bf x}_{vv} \dot{v}^2  
\end{align*}
If $\alpha(t)$ is a geodesic, then it is normal to the surface, that means:
\begin{align*}
\ddot{\alpha}(t) \cdot {\bf x}_u = 0 \quad \text{and} \quad\ddot{\alpha}(t) \cdot {\bf x}_v = 0
\end{align*}
These two conditions lead to the following two geodesic equations (by using $g_{12} = {\bf x}_u \cdot {\bf x}_v = 0$):
\begin{align*}
{\bf x}_u \cdot {\bf x}_u \ddot{u} + {\bf x}_u \cdot {\bf x}_{uu} \dot{u}^2 + 2 {\bf x}_u \cdot {\bf x}_{uv} \dot{u} \dot{v} + {\bf x}_u \cdot {\bf x}_{vv} \dot{v}^2 &= 0 \\  
{\bf x}_v \cdot {\bf x}_v \ddot{v} + {\bf x}_v \cdot {\bf x}_{uu} \dot{u}^2 + 2 {\bf x}_v \cdot {\bf x}_{uv} \dot{u} \dot{v} + {\bf x}_v \cdot {\bf x}_{vv} \dot{v}^2 &= 0   
\end{align*}
We can use the previous results (\ref{fff_deriv}) to replace and get:
\begin{align*}
g_{11} \ddot{u} + \frac{\partial_u g_{11}}{2} \dot{u}^2 + 2 \frac{\partial_v g_{11}}{2} \dot{u} \dot{v} - \frac{\partial_u g_{22}}{2} \dot{v}^2 &= 0 \\  
g_{22} \ddot{v} - \frac{\partial_v g_{11}}{2} \dot{u}^2 + 2 \frac{\partial_u g_{22}}{2} \dot{u} \dot{v} + \frac{\partial_v g_{22}}{2} \dot{v}^2 &= 0   
\end{align*}
We can divide the first equation by $g_{11}$ and the second by $g_{22}$ to get:
\begin{align*}
\ddot{u} + \frac{\partial_u g_{11}}{2 g_{11}} \dot{u}^2 + 2 \frac{\partial_v g_{11}}{2 g_{11}} \dot{u} \dot{v} - \frac{\partial_u g_{22}}{2 g_{11}} \dot{v}^2 &= 0 \\  
\ddot{v} - \frac{\partial_v g_{11}}{2 g_{22}} \dot{u}^2 + 2 \frac{\partial_u g_{22}}{2 g_{22}} \dot{u} \dot{v} + \frac{\partial_v g_{22}}{2 g_{22}} \dot{v}^2 &= 0   
\end{align*}
By using the {\sc Christoffel} symbols these geodesic equations can be written as:
\begin{align*}
\ddot{u} + \Gamma^1_{11} \dot{u}^2 + 2 \Gamma^1_{12} \dot{u} \dot{v} + \Gamma^1_{22} \dot{v}^2 &= 0 \\  
\ddot{v} + \Gamma^2_{11} \dot{u}^2 + 2 \Gamma^2_{12} \dot{u} \dot{v} + \Gamma^2_{22} \dot{v}^2 &= 0   
\end{align*}
For checking the geodesic equations we use the following form:
\begin{align} \label{geodesic_eqn_check}
2 g_{11} \ddot{u} + \partial_u g_{11} \dot{u}^2 + 2 \partial_v g_{11} \dot{u} \dot{v} - \partial_u g_{22} \dot{v}^2 &= 0 \\  
2 g_{22} \ddot{v} - \partial_v g_{11} \dot{u}^2 + 2 \partial_u g_{22} \dot{u} \dot{v} + \partial_v g_{22} \dot{v}^2 &= 0 \nonumber  
\end{align}

\subsection{Geodesics on isothermal {\sc Liouville} surfaces}

We repeat here the explanation from \cite{dar} how to arrive at the differential equations of the geodesics of an isothermal {\sc Liouville} surface.

Start with the line element of the isothermal {\sc Liouville} surface:
\begin{align*}
{\dif s}^2 = (U + V)({\dif u}^2 + {\dif v}^2)
\end{align*}
In a first step rewrite it as product of sums of squares:
\begin{align*}
{\dif s}^2 = (\sqrt{U - a}^2 + \sqrt{a + V}^2)({\dif u}^2 + {\dif v}^2)
\end{align*}
Then this can be written as a sum of squares:
\begin{align*}
{\dif s}^2 = (\sqrt{U - a}{\dif u} + \sqrt{a + V}{\dif v})^2 + (\sqrt{U - a}{\dif v} - \sqrt{a + V}{\dif u})^2
\end{align*}
With:
\begin{align*}
{\dif t} &= \sqrt{U - a}{\dif u} + \sqrt{a + V}{\dif v} \\
{\dif t_1} &= \frac{{\dif u}}{\sqrt{U - a}} - \frac{{\dif v}}{\sqrt{a + V}}
\end{align*}
we get geodesic parallel coordinates:
\begin{align*}
{\dif s}^2 = {\dif t}^2 + (U-a)(a + V){\dif t_1}^2
\end{align*}
And here we see that the geodesics are given by $t_1 = \text{const.}$ that means ${\dif t_1} = 0$:
\begin{align*}
0 = {\dif t_1} = \frac{{\dif u}}{\sqrt{U - a}} - \frac{{\dif v}}{\sqrt{a + V}}
\end{align*}
or equivalently:
\begin{align*}
\frac{{\dif u}}{\sqrt{U - a}} = \frac{{\dif v}}{\sqrt{a + V}}
\end{align*}
Then we have:
\begin{align*}
{\dif s} = {\dif t} &= \sqrt{U - a} {\dif u} + \sqrt{a + V} {\dif v} \\
                         &= \frac{(U - a) {\dif u}}{\sqrt{U - a}} + \frac{(a + V) {\dif v}}{\sqrt{a + V}} \\
                         &= \frac{U {\dif u}}{\sqrt{U - a}} + \frac{V {\dif v}}{\sqrt{a + V}}
\end{align*}
and after that:
\begin{align*}
\frac{{\dif t}}{U + V} = \frac{{\dif u}}{\sqrt{U - a}} = \frac{{\dif v}}{\sqrt{a + V}}
\end{align*}
This gives finally a system of first order differential equations for the geodesics:
\begin{align*}
\frac{{\dif u}}{{\dif t}} &= \frac{\sqrt{U - a}}{U + V} \\
\frac{{\dif v}}{{\dif t}} &= \frac{\sqrt{a + V}}{U + V} 
\end{align*}
These expressions satisfy the geodesic equations (\ref{geodesic_eqn_check}).

\section{Higher dimensions}

\subsection{Isothermal {\sc Liouville} maps}

In higher dimensions ($n \ge 3$) we have (see \cite{ms}):

\begin{theorem}[{\sc Liouville}'s theorem on conformal mappings]
Let ${\bf x} : O \to {\bf x}(O)$ be a one-to-one $C^n$ conformal map, where $O \subset \R^n$ for $n \ge 3$ is open. Then ${\bf x}$ is a composition of isometries, dilations and inversions.
\end{theorem}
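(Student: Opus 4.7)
The plan is to show that every conformal map $\mathbf{x}:O\to\R^n$ with $n\ge 3$ coincides locally with a Möbius transformation of $\R^n\cup\{\infty\}$, from which the advertised decomposition follows at once since the Möbius group is generated by isometries, dilations and inversions. I will proceed in three stages: first translate conformality into a rigidity PDE on the conformal factor, then solve that PDE, and finally identify the resulting maps geometrically.

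First I would write the conformality condition as $\partial_i\mathbf{x}\cdot\partial_j\mathbf{x} = \lambda^2\,\delta_{ij}$, so that the pullback metric $g=\lambda^2\,\delta$ is the flat Euclidean target metric expressed in curvilinear coordinates and is therefore Riemann-flat. Setting $\phi:=\log\lambda$, a direct computation of the Riemann tensor of $e^{2\phi}\delta$ yields
\[
R_{ijkl} = e^{2\phi}\bigl[\delta_{ik}A_{jl}+\delta_{jl}A_{ik}-\delta_{il}A_{jk}-\delta_{jk}A_{il}\bigr],\quad
A_{ij} := \partial_i\partial_j\phi - \partial_i\phi\,\partial_j\phi + \tfrac{1}{2}\,|\nabla\phi|^2\,\delta_{ij}.
\]
For $n\ge 3$, the Ricci and scalar contractions of $R_{ijkl}\equiv 0$ let one solve algebraically for $A_{ij}$, forcing the full system $A_{ij}\equiv 0$. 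This step is the crux of the proof and the main obstacle: in $n=2$ the same contractions only produce the single trace equation $\Delta\phi=0$, which is exactly why the theorem must fail in the plane, where every holomorphic map is already conformal.

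Next I would change variables to $\mu := 1/\lambda = e^{-\phi}$; the equation $A_{ij}=0$ then becomes the rigid Hessian identity $\partial_i\partial_j\mu = h(p)\,\delta_{ij}$ with $h(p) := \tfrac{1}{2}\mu\,|\nabla\phi|^2$. Differentiating $\partial_i\partial_j\mu=0$ (for $i\ne j$) once more in a third variable and using the symmetry of third partials forces $h$ to be a constant, and integrating the Hessian equation gives
\[
\mu(p) = h\,|p|^2 + b\cdot p + a,\qquad a\in\R,\ b\in\R^n,\ h\in\R.
\]
Substituting this polynomial back into $A_{ij}=0$ produces the algebraic constraint $|b|^2 = 4ah$, leaving only two families of solutions: either $h=b=0$ and $\mu\equiv a$ is a positive constant, or $h\ne 0$ and $\mu(p)=h\,|p-p_0|^2$ with $p_0 := -b/(2h)$.

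Finally I would classify $\mathbf{x}$ along these two cases. When $\mu$ is constant, $\lambda$ is constant and $D\mathbf{x}(p)=\lambda\,O(p)$ with $O(p)\in O(n)$; the integrability $\partial_i\partial_j\mathbf{x}^k=\partial_j\partial_i\mathbf{x}^k$ then forces $O$ to be locally constant, so $\mathbf{x}$ is a similarity, i.e. an isometry composed with a dilation. When $\mu(p)=h\,|p-p_0|^2$, let $J(q):=(q-p_0)/|q-p_0|^2+p_0$ be the inversion in a sphere about $p_0$; using $|J(q)-p_0|=1/|q-p_0|$, a short computation shows that $\mathbf{x}\circ J$ has constant conformal factor $1/|h|$ and, by the previous case, is a similarity. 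Since $J$ is an involution, $\mathbf{x} = (\mathbf{x}\circ J)\circ J$ exhibits $\mathbf{x}$ as a composition of isometries, a dilation and the inversion $J$, which is precisely the decomposition claimed.
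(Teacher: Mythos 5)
The paper does not prove this theorem: it is quoted as a known result (Liouville's rigidity theorem for conformal maps) with a citation to the literature, and is used only to motivate the remark that isothermal {\sc Liouville} manifolds are restricted for $n\ge 3$. So there is no in-paper proof to compare against; your proposal must be judged on its own, and it is essentially the standard classical argument (flatness of the pullback metric $e^{2\phi}\delta$ forces the tensor $A_{ij}$ to vanish, the substitution $\mu=1/\lambda$ turns this into the rigid Hessian equation $\partial_i\partial_j\mu=h\,\delta_{ij}$, and the two resulting families of $\mu$ give similarities and inversion-conjugated similarities). The outline is correct and the computations you quote check out ($h$ constant, the constraint $|b|^2=4ah$ with your normalization, and the conformal factor of ${\bf x}\circ J$ being constant). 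Three points deserve to be made explicit before this counts as a proof. First, the step ``$R_{ijkl}\equiv 0$ forces $A_{ij}\equiv 0$'' rests on the fact that the map sending a symmetric $2$-tensor $A$ to $\delta_{ik}A_{jl}+\delta_{jl}A_{ik}-\delta_{il}A_{jk}-\delta_{jk}A_{il}$ is injective precisely when $n\ge 3$; contracting once gives $(n-2)A_{jl}+(\operatorname{tr}A)\delta_{jl}=0$ and contracting again kills the trace, so you should write that line out rather than wave at ``Ricci and scalar contractions.'' Second, in the case $h\ne 0$ you must observe that $p_0\notin O$ (since $\mu=h|p-p_0|^2$ must be positive on $O$), otherwise the inversion $J$ is not defined on the relevant set. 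Third, your argument is local and per connected component: the classification of $\mu$ holds on each ball in $O$, and you need a unique-continuation remark (two Möbius maps agreeing on an open set agree everywhere) to conclude that ${\bf x}$ is the restriction of a single composition of isometries, dilations and inversions on each component of $O$. With those additions the proof is complete at the stated $C^n$ ($n\ge 3$, hence at least $C^3$) regularity, which is exactly what the curvature computation requires.
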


This generalized theorem states that every conformal map ${\bf x}$ in $\R^n$ for $n \ge 3$ is a composition of Möbius transformations. Therefore the isothermal {\sc Liouville} manifolds in higher dimensions than $2$ are somewhat restricted.

\subsection{Main theorem for $n$ dimensions}

The main theorem \ref{main_thm} also holds for higher dimensions and the proof is similar to the $2$-dimensional case, therefore we only state it here:

\begin{theorem}[Main theorem for general $n$]
Consider two points $P^0(p^0_1,p^0_2,\dots,p^0_n)$ and $P^1(p^1_1,p^1_2,\dots,p^1_n)$ in an open convex subset $O \subset \R^n$. Then these points uniquely determine an $n$-rectangle in $O$ (possibly degenerated). Now consider a map ${\bf x} : O \to {\bf x}(O)$. If and only if this map ${\bf x}$ has the following orthogonal {\sc Liouville} line element: 
\begin{align*}
{\dif s}^2 = \sum_{k=1}^{n} \left( \sum_{i=1}^{n} U_{i k}(u_i)\right) {\dif u}_k^2
\end{align*}
the images (under the map ${\bf x}$) of the diagonals in the $n$-rectangle determined by the points $P^0$ and $P^1$ have the same energy.
\end{theorem}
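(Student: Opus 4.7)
The plan is to mirror Theorems \ref{main_thm_1} and \ref{main_thm_2}, passing from two to $n$ dimensions in each step. For the forward direction I would let $M = \frac{1}{2}(P^0 + P^1)$ be the centre of the $n$-rectangle and $\delta = (\alpha_1, \ldots, \alpha_n)^t := P^1 - M$ its half-diagonal vector. The $2^n$ vertices are then $M + (\epsilon_1 \alpha_1, \ldots, \epsilon_n \alpha_n)^t$ with $\epsilon \in \{\pm 1\}^n$, and the $2^{n-1}$ main diagonals (one per class modulo $\epsilon \sim -\epsilon$) are parametrized as $d_\epsilon(t) = M + t(\epsilon_1 \alpha_1, \ldots, \epsilon_n \alpha_n)^t$, $t \in [-1,1]$. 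Diagonality of $G$ and the assumed Liouville form $g_{kk} = \sum_i U_{ik}(u_i)$ give
\begin{align*}
q_\epsilon(t) = \dot{d_\epsilon}(t)^t G(d_\epsilon(t)) \dot{d_\epsilon}(t) = \sum_{k=1}^n \alpha_k^2 \sum_{i=1}^n U_{ik}(u_i^0 + t \epsilon_i \alpha_i) = \sum_{i=1}^n H_i(t \epsilon_i \alpha_i),
\end{align*}
with $H_i(s) := \sum_k \alpha_k^2 U_{ik}(u_i^0 + s)$. The substitution $s = t \epsilon_i \alpha_i$ in each summand turns $\int_{-1}^{+1} H_i(t \epsilon_i \alpha_i)\,{\dif t}$ into $\frac{1}{|\alpha_i|}\int_{-|\alpha_i|}^{+|\alpha_i|} H_i(s)\,{\dif s}$, which is visibly independent of $\epsilon_i$. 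Summing over $i$ shows that $E({\bf x}(d_\epsilon))$ does not depend on $\epsilon$.

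For the converse I would work in three stages. In \emph{Stage 1}, for each pair $i \neq j$ I restrict to the degenerate sub-rectangle with $\alpha_l = 0$ for $l \notin \{i,j\}$; its two diagonals live in the $(u_i,u_j)$-slice and step 3 of the proof of Theorem \ref{main_thm_2} applies verbatim to give $g_{ij} \equiv 0$, so the line element is orthogonal. In \emph{Stage 2}, within the same 2D slice, the non-square construction of steps 5--8 of that same proof (with half-diagonal $(\ldots,\alpha,\ldots,\alpha\varepsilon,\ldots)^t$ in positions $i,j$) yields
\begin{align*}
\frac{\partial^2 g_{ii}}{\partial u_i\,\partial u_j} \equiv 0 \quad\text{and}\quad \frac{\partial^2 g_{jj}}{\partial u_i\,\partial u_j} \equiv 0 \qquad \text{for every pair } i \neq j.
\end{align*}

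The hard part is \emph{Stage 3}: showing $\partial^2_{u_i u_j} g_{kk} \equiv 0$ when $k \notin \{i,j\}$, a derivative that is invisible inside any 2D slice because $g_{kk}$ enters the diagonal energy only through the factor $\alpha_k^2$ and drops out as soon as $\alpha_k = 0$. My remedy is a degenerate 3D sub-rectangle in the $(u_i,u_j,u_k)$-slice, whose four diagonals $D_1, D_2, D_3, D_4$ I would label by fixing $\sigma_{u_i} = +$ in each parametrization and letting $(\sigma_{u_j},\sigma_{u_k})$ run through $(+,+),(+,-),(-,+),(-,-)$. The alternating sum $\Delta E := E_{D_1} - E_{D_2} + E_{D_3} - E_{D_4}$ vanishes by hypothesis for every $(\alpha_i,\alpha_j,\alpha_k)$. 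Scaling $(\alpha_i,\alpha_j,\alpha_k) \mapsto (\lambda\alpha_i,\lambda\alpha_j,\lambda\alpha_k)$ rewrites this as $\int_{-\lambda}^{\lambda} F(s)\,{\dif s} = 0$ for all small $\lambda$, where $F(t)$ is the integrand of $\Delta E$; differentiating in $\lambda$ forces $F$ to be odd in $t$, so $F''(0) = 0$. A direct Taylor expansion then yields
\begin{align*}
0 = F''(0) = 8\,\alpha_i\alpha_k \bigl[\alpha_i^2\,\partial_{u_i}\partial_{u_k} g_{ii} + \alpha_j^2\,\partial_{u_i}\partial_{u_k} g_{jj} + \alpha_k^2\,\partial_{u_i}\partial_{u_k} g_{kk}\bigr](M),
\end{align*}
and Stage 2 annihilates the first and third bracket terms, so $\partial_{u_i}\partial_{u_k} g_{jj} \equiv 0$ since $\alpha_j$ and $M$ are arbitrary; cycling the roles of $(i,j,k)$ covers every remaining case. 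Together with Stages 1--2 this gives $\partial_{u_i}\partial_{u_j} g_{kk} \equiv 0$ for all $i \neq j$ and all $k$, and a routine separation of variables then produces $g_{kk}(u_1,\ldots,u_n) = \sum_i U_{ik}(u_i)$, the desired orthogonal Liouville form. The real difficulty is the combinatorial choice in Stage 3: the alternating sum must be picked so that Stage 2 cancels exactly the unwanted mixed partials, leaving only the target $\partial_{u_i}\partial_{u_k} g_{jj}$.
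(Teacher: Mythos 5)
Your proof is correct, but it necessarily goes beyond what the paper offers: for this $n$-dimensional statement the paper gives no argument at all, saying only that the proof is ``similar to the $2$-dimensional case'' and omitting it. Your forward direction, via the substitution $s = t\epsilon_i\alpha_i$ reducing each term to $\frac{1}{|\alpha_i|}\int_{-|\alpha_i|}^{|\alpha_i|}H_i(s)\,{\dif s}$, is a cleaner mechanism than the odd-function cancellation used in Theorem \ref{main_thm_1} and makes the independence of $\epsilon$ transparent. The real added value is your Stage 3: you correctly observe that ``similar to the $2$-dimensional case'' hides a nontrivial step, namely that $\partial_{u_i}\partial_{u_j}g_{kk}$ with $k\notin\{i,j\}$ is invisible to every two-dimensional slice (the factor $\alpha_k^2=0$ removes $g_{kk}$ from the energy there), and your alternating sum $\sum_{\sigma_j,\sigma_k}\sigma_k E_{(\sigma_j,\sigma_k)}$ over the four diagonals of a degenerate 3D sub-rectangle does isolate exactly the cross term: I have checked that the weights $\sigma_k$ kill the pure second derivatives and the $(i,j)$ and $(j,k)$ mixed terms, leaving $F''(0)=8\alpha_i\alpha_k\bigl(\alpha_i^2\,\partial_{u_i}\partial_{u_k}g_{ii}+\alpha_j^2\,\partial_{u_i}\partial_{u_k}g_{jj}+\alpha_k^2\,\partial_{u_i}\partial_{u_k}g_{kk}\bigr)$ as you state, after which Stage 2 and the arbitrariness of $\alpha_j$ and $M$ finish the case $k\notin\{i,j\}$. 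One small caution: in Stages 1--2 you import step 3 of Theorem \ref{main_thm_2} ``verbatim'', but that step asserts that equality of the two energies makes $f_1$ odd, which does not follow from the single identity $\int_{-1}^{1}f_1(t)\,{\dif t}=0$; it requires the nested family of rectangles with half-diagonal $\lambda\delta$, $0<\lambda\le 1$ --- precisely the scaling-and-differentiating argument you spell out in Stage 3. Make that scaling explicit in Stages 1--2 as well (it simultaneously repairs the same unjustified assertion in the paper's own two-dimensional proof), and your argument is complete.
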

It should be noted that there is no restriction in choosing the two points $P^0$ and $P^1$ in the domain $O$ of definition.

\end{document}